\def\R{\mathbb{R}}
\def\N{\mathbb{N}}
\theoremstyle{plain}
\newtheorem{theorem}{Theorem}
\newtheorem{Prop}{Proposition}
\newtheorem{lemma}{Lemma}
\theoremstyle{remark}
\begin{document}

\title{Quantitative stability of certain families of periodic solutions in the Sitnikov problem}
\author{Jorge Gal\'an \footnote{Universidad de Sevilla, Sevilla- Spain.} \and Daniel N\'u\~nez \footnote{Pontificia Universidad Javeriana-Cali, Colombia.} \and Andr\'es Rivera \footnote{Pontificia Universidad Javeriana-Cali, Colombia.} }
\maketitle



\section{Introduction} 
There are a extensive research devoted to the study of periodic solutions in the \textit{the Sitnikov problem}, which is defined as follows: Two bodies with equal mass $m_{1}= m_{2}$ (\textit{called primaries}) are moving in the plane $x,y$ around their center of mass (\textit{barycenter}) as solutions of the planar two body problem, a third body $m_{3}$ with zero mass move along $z$-axis through the barycenter of the primaries. The Sitnikov problem deals with the study of the orbits of $m_{3}.$ In appropriate units the equation of motion of the zero mass body is
\begin{equation}\label{Sit}
\ddot{z}=-\frac{z}{(z^{2}+r(t,e)^{2})^{3/2}},
\end{equation}

\noindent
where $e\in \,[0,1[$ is the eccentricity of the elliptic orbits described by the primaries and $r(t,e)$ denotes the distance from the primaries to the origin (center of mass). The function $r(\cdot,e)$ has minimal period $2\pi$ and is implicitly defined in terms of Kepler's equation, namely
\begin{equation}\label{kepler}
r=\frac{1}{2}(1-e\cos u), \quad u-e\sin u =t.
\end{equation}

Many contributions has been given about the dynamics in the Sitnikov problem both from the analytical and numerical point of view, since its formulation by K.A. Sitnikov in 1960. We refer to \cite{Alekseev
II,Moser} for the most classical results and \cite{Martinez-Chiralt} for numerical results. Since the Sitnikov equation is a forced oscillator with minimal period $2\pi$ (for $e \neq 0$)  one of the first questions is the study of
families of periodic solutions which depend continuously on the
eccentricity. It can be proved that the period of this families must be equal to $2N\pi$ for some $N\in \N$, see \cite{Rivera2}. We call this solutions \textit{subharmonics}. The searching of \textit{subhamonics} became more simpler if one is restricted
to the symmetric case: even or odd solutions.
Notice that the function $r(\cdot,e)$ is even and so (\ref{Sit}) is
invariant under the  symmetries
$$(t,z)\mapsto (-t,z),\qquad (t,z)\mapsto (t,-z),$$
so one can obtained for all $N \in\N$ an even $2N\pi $-periodic solution by solving the boundary value problem
\begin{equation}\label{bvp}
\ddot{z}=-\frac{z}{(z^{2}+r(t,e)^{2})^{3/2}},\qquad \dot{z}(0)=\dot{z}(N\pi)=0,
\end{equation}
and by extending symmetrically on the interval $[-N\pi, 0]$ and finally exten\-ding periodically over all $\R$. This approach is called the \textit{shooting method}: the searching the suitable initial position $\xi=z(0)$ for each $e \in [0,1[$  from the rest in order to obtain the second boundary condition in (\ref{bvp}). In this way $\xi=\xi(e)$ will be a continuous function for small values of $e$, provided local families of periodic solutions parametrized by the eccentricity. Using the method of global continuation of Leray-Schauder, Llibre and Ortega in \cite{Llibre-Ortega} proved that these families can be continued from the known $2N\pi$-periodic solutions in the circular case ($e=0$) for nonnecessarily small values of the eccentricity $e$ and in some cases for all values of $e\in \, [0,1[.$ However this approach does not say anything about the stability properties of this periodic solutions.

\vspace{0.5 cm}
\noindent
It is well known that for $e=0$ there are a finite number of nontrivial subharmoncis (with period $2N\pi$). On the other hand all them are parabolic and unstable (in the Lyapunov sense) if we consider the unperturbed autonomous equation ($e=0$) like a $2\pi$-periodic equation.

\noindent
In this document we present a new method that quantifies the mentioned bifurcating families and them stabilities properties at least in first approximation. Our approach proposes two general methods: The first one is to estimate the growing of the canonical solutions for one-parametric  differential equation of the form
\[
\ddot{x}+a(t,\lambda)x=0,
\]
with $a\in C^{1}([0,T] \times [0,\Lambda])$ (Lemma \ref{control hill}, Section 2). The second one gives stability criteria for one-parametric Hill's equation of the form
\[
\ddot{x}+q(t,\lambda)x=0, \quad (\ast)
\]
where $q(\cdot,\lambda)$ is $T$-periodic and $q\in C^{3}(\R\times [0,\Lambda])$, such that for $\lambda=0$ the equation $(*)$ is parabolic (Lemma \ref{LH}, Section 2). The Lemma \ref{LH} determines an explicit $\lambda$-interval of ellipticity of hiperbolicity for $(\ast)$. Henceforth this can be viewed as a quantified version of stability classical results for Hill's equation like in \cite{Magnus-Winkler}. To sum up, the main contributions of this document besides of the two mentioned before are the following:

\begin{enumerate}
\item For any $N\in \N$ odd, we gives sufficient conditions for the ellipticity of hyperbolicity of the families of nontrivial even, $2N\pi$-periodic solutions of (\ref{Sit}) in a computable interval of eccentricities $e$ (Theorem, Section 4 ).

\item  For $N=1,3$ we shows  that all  families of nontrivial even, $2N\pi$-periodic solutions of (\ref{Sit}) are elliptic for $e\in ]0,e^{*}[$ for a computable $e^{*}$ (Section 4 and Section 5).
\end{enumerate}

\section{Fundamental results}
In this part of the document we introduce, to the best of our knowledge, a novel technique to estimate uniform bounds for the growing of the canonical solutions of second order differential equations  of the form
\begin{equation}\label{second order}
\ddot{y}+a(t,\lambda)y=0, \qquad (*)
\end{equation}
for $\displaystyle{a(t,\lambda)\in C^{1}([0,T]\times [0,\Lambda])}$ based on the zeros of an appropriate function. Notice that in the particular case $a(t+T,\lambda)=a(t,\lambda)$ for all $(t,\lambda)\in \R\times [0,\Lambda]$ and $T>0$ we have a parametric Hill's equation.


\begin{lemma}\label{control hill} Consider the family of  equations $(\ref{second order})$ with the previous hypo\-thesis on the function $a(t,\lambda)$. Suppose that $\phi_{1}(t,\lambda),\phi_{2}(t,\lambda)$ are the canonical solutions of $(\ref{second order})$, i.e. 
\[
\phi_{1}(0,\lambda)=\dot{\phi}_{2}(0,\lambda)=1, \quad \dot{\phi}_{1}(0,\lambda)=\phi_{2}(0,\lambda)=0,
\]
for all $\lambda\in [0,\Lambda]$. For each $\lambda$ define
\begin{equation}\label{R}
R_{\lambda}:=\sup_{\mu\in [0,\lambda]}\max \left\{\|\phi_{i}(\cdot\,,\mu)\|_{\infty}, \, \|\dot{\phi}_{i}(\cdot\,,\mu)\|_{\infty} \,: i=1,2\right\}
\end{equation}
Let $r_{0}$ a positive number greater than $R_{0}$. Assume the following conditions
\begin{enumerate}
\item Exist a positive continuous function $d=d(\lambda,R)$ which is increasing in both variables and such that 
\[
d(\lambda,R_{\lambda})\geq \sup_{t\in [0,T]}|\partial_{\lambda}a(t;\lambda)|
\]
\item There is a $\lambda^{*}\in  \,]0,\Lambda]$ such that the function $Q(\lambda,\cdot)$ has at least two consecutive zeros for each $\lambda \in [0,\lambda^{*}]$ where 
\[
Q(\lambda,R)=2T\lambda R^{3} d(\lambda,R)-R+r_{0},
\]
and moreover if $R_{1,\lambda}, R_{2,\lambda}$ are the first two consecutive zeros of  $\displaystyle{Q(\lambda,\cdot)}$ then 
\begin{enumerate}
\item $r_{0} \leq R_{1,\lambda}$ for all $\lambda \in [0,\lambda^{*}]$ 
\item $Q(\lambda,R)>0\,(<0)$  for all $R\in [0,R_{1,\lambda}[$ ($R\in [R_{1,\lambda},R_{2,\lambda}[$) and for all $\lambda \in [0,\lambda^{*}]$.
\end{enumerate}
Then $R_{\lambda}\leq R_{1,\lambda}$ for all $\lambda \in ]0,\lambda^{*}]$.
\end{enumerate}
\end{lemma}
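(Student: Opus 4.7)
The plan is to derive an a priori cubic integral inequality for $R_\lambda$ by applying variation of parameters to $\partial_\lambda\phi_i$, and then to close the argument by a continuation/contradiction step that pits this inequality against the algebraic barrier defined by the first zero of $Q(\lambda,\cdot)$.

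Setting $\psi_i:=\partial_\lambda\phi_i$ and differentiating $\ddot\phi_i+a(t,\lambda)\phi_i=0$ in $\lambda$ (the canonical initial data are $\lambda$-independent) yields
$$\ddot\psi_i+a(t,\lambda)\psi_i=-\partial_\lambda a(t,\lambda)\,\phi_i(t,\lambda),\qquad \psi_i(0,\lambda)=\dot\psi_i(0,\lambda)=0.$$
Since the Wronskian of $\{\phi_1,\phi_2\}$ is identically $1$, variation of parameters gives
$$\psi_i(t,\lambda)=\int_0^t\bigl[\phi_1(t,\lambda)\phi_2(s,\lambda)-\phi_2(t,\lambda)\phi_1(s,\lambda)\bigr]\partial_\lambda a(s,\lambda)\,\phi_i(s,\lambda)\,ds,$$
and a parallel formula for $\partial_t\psi_i$ with $\phi_j(t,\lambda)$ replaced by $\dot\phi_j(t,\lambda)$ (the $c_1'\phi_1+c_2'\phi_2$ boundary term vanishes by construction). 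Replacing each factor $\phi_j,\dot\phi_j$ by $R_\lambda$ and invoking hypothesis 1 to dominate $|\partial_\lambda a|$ by $d(\lambda,R_\lambda)$ produces
$$\max\bigl\{|\psi_i(t,\lambda)|,\,|\partial_t\psi_i(t,\lambda)|\bigr\}\leq 2T\,R_\lambda^{3}\,d(\lambda,R_\lambda),\qquad t\in[0,T].$$
Writing $\phi_i(t,\lambda)=\phi_i(t,0)+\int_0^\lambda\psi_i(t,\mu)\,d\mu$ and the analogous identity for $\dot\phi_i$, then taking $\sup_{t\in[0,T]}$, $\max_{i=1,2}$ and finally $\sup$ over $\mu\in[0,\lambda]$ (all harmless because the right-hand side is non-decreasing in $\lambda$) gives the key a priori estimate
$$R_\lambda\;\leq\;R_0+2T\int_0^\lambda R_\mu^{3}\,d(\mu,R_\mu)\,d\mu,\qquad \lambda\in[0,\lambda^*]. \qquad (\dagger)$$

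With $(\dagger)$ in hand, I would argue by contradiction. Suppose the conclusion fails and set $\hat\lambda:=\inf\{\lambda\in(0,\lambda^*]:R_\lambda>R_{1,\lambda}\}$. The map $\lambda\mapsto R_\lambda$ is continuous as the running supremum on a compact $t$-interval of a function continuous in $(t,\mu)$, while $\lambda\mapsto R_{1,\lambda}$ is continuous because $R_{1,\lambda}$ is a strict sign-changing zero of the continuous function $R\mapsto Q(\lambda,R)$ (intermediate value theorem applied at $R_{1,\lambda}\pm\varepsilon$ for $\lambda'$ near $\lambda$). Since $R_0<r_0\leq R_{1,\lambda}$, one has $\hat\lambda>0$ and $R_{\hat\lambda}=R_{1,\hat\lambda}$ by continuity. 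Monotonicity of $\mu\mapsto R_\mu$ together with the monotonicity of $d$ in both arguments yields $R_\mu^{3}d(\mu,R_\mu)\leq R_{\hat\lambda}^{3}d(\hat\lambda,R_{\hat\lambda})$ on $[0,\hat\lambda]$, so $(\dagger)$ at $\hat\lambda$ reduces to
$$R_{\hat\lambda}\;\leq\;R_0+2T\hat\lambda\, R_{\hat\lambda}^{3}d(\hat\lambda,R_{\hat\lambda})\;<\;r_0+2T\hat\lambda\, R_{\hat\lambda}^{3}d(\hat\lambda,R_{\hat\lambda}),$$
i.e.\ $Q(\hat\lambda,R_{\hat\lambda})>0$. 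But $R_{\hat\lambda}=R_{1,\hat\lambda}$ is a zero of $Q(\hat\lambda,\cdot)$, contradicting hypothesis 2(b).

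The main technical obstacle is extracting the precise cubic factor $R_\lambda^{3}$ in the variation-of-parameters integrand: two powers come from the Green kernel $\phi_1(t)\phi_2(s)-\phi_2(t)\phi_1(s)$ and one from the inhomogeneity $\phi_i(s)$. Any cruder bound, such as a naive Gronwall exponential in $T$, would be far too lossy to match the algebraic barrier $Q(\lambda,R)=2T\lambda R^{3}d(\lambda,R)-R+r_0$. A secondary bookkeeping issue is the joint continuity of $R_\lambda$ and $R_{1,\lambda}$ that underwrites the contradiction step; this follows from the strict sign-change built into hypothesis 2(b) together with smooth parameter dependence of ODE solutions, but is worth spelling out cleanly.
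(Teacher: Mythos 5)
Your argument is correct and follows essentially the same route as the paper: differentiate in the parameter, solve the resulting Cauchy problem by variation of parameters to get the bound $2T R_\lambda^{3}d(\lambda,R_\lambda)$ on $\partial_\lambda\phi_i$ and $\partial_\lambda\dot\phi_i$, deduce $Q(\lambda,R_\lambda)\geq 0$, and then use hypothesis 2(b) together with continuity of $\lambda\mapsto R_\lambda$ and the initial condition $R_0<r_0\leq R_{1,\lambda}$ to exclude the branch beyond $R_{2,\lambda}$. The only cosmetic differences are that you use the integral form of the mean value theorem and phrase the final connectedness step as a first-crossing contradiction, whereas the paper argues that $\{R_\lambda\}$ is an interval containing $R_0$; both are the same idea.
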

 

\vspace{0.5 cm}
\begin{proof}
For a fixed $\lambda \in [0,\Lambda]$ and $\mu \in [0,\lambda]$, let $\phi_{i}(\cdot,\mu)$, $i=1,2$ the canonical solutions of $(\ref{second order})$. By the Mean Value Theorem we obtain 
\[
\big|\phi_{i}(t,\mu)-\phi_{i}(t,0)\big| \leq \max_{\mu \in [0,\Lambda]} \|\partial_{\mu}\phi_{i}(\cdot\,,\mu)\|_{\infty}\,\mu,
\]
\noindent
for all $t\in [0,T]$. By differentiability respect to parameters, the function $y_{i}(t)=\partial_{\mu}\phi_{i}(t,\mu)$ satisfies the following Cauchy problem 
\begin{equation}\label{cp}
\left\{
\begin{aligned}
\ddot{y}+a(t;\mu)y&=-\partial_{\mu}a(t;\mu)\phi_{i}(t,\mu)\\
y(0)=\dot{y}(0)&=0,
\end{aligned}
\right.
\end{equation}
Therefore by the method of variation of parameters we obtain
\begin{equation*}
\begin{split}
|y_{i}(t)|&\leq \int_{0}^{t}\Big|G(s,t,\mu)\partial_{\mu}a(s;\mu)\phi_{i}(s,\mu)\Big|ds,\\
|\dot{y}_{i}(t)|&\leq \int_{0}^{t}\Big|\partial_{t}G(s,t,\mu)\partial_{\mu}a(s;\mu)\phi_{i}(s,\mu)\Big|ds,
\end{split}
\end{equation*} 
where $\displaystyle{G}(t,s,\mu)=\phi_{1}(s;\mu)\phi_{2}(t,\mu)-\phi_{1}(t,\mu)\phi_{2}(s,\mu)$. In consequence, for all $t\in [0,T]$ we have 
\[
|y_{i}(t)|, |\dot{y}_{i}(t)|\leq 2TR_{\mu}^{3}d(\mu,R_{\mu}).
\]
The above inequalities implies 
\begin{equation*}
\begin{split}
\big|\phi_{i}(t,\mu)\big|&\leq \big|\phi_{i}(t,0)\big|+2T\mu R_{\mu}^{3}d(\mu,R_{\mu}), \\
\big|\dot{\phi}_{i}(t,\mu)\big|&\leq \big|\dot{\phi}_{i}(t,0)\big|+2T\mu R_{\mu}^{3}d(\mu,R_{\mu}).
\end{split}
\end{equation*} 

\noindent
From the monotonicity of $d(\lambda, R)$ we deduce
\[
\|\phi_{i}(t,\mu)\|_{\infty}, \|\dot{\phi}_{i}(t,\mu)\|_{\infty}\leq r_{0}+2T\lambda R_{\lambda}^{3}d(\lambda,R_{\lambda})
\]
for all $\mu \in [0,\lambda]$ and $\lambda \in [0,\lambda^{*}]$. Therefore, $\displaystyle{Q(\lambda,R_{\lambda})\geq 0}$ for all $\lambda \in [0,\lambda^{*}]$. Then, from the assumption \textit{2}. (part (b)) we have that 
\[
R_{\lambda}\in [0,R_{1,\lambda}] \cup [R_{2,\lambda},\infty[
\]
By continuity the set $\displaystyle{\left\{R_{\lambda}: \lambda \in [0,\lambda_{*}]\right\}}$ is an interval. Besides, $\lim_{\lambda \to 0}R_{\lambda}=R_{0}$ this implies that $R_{\lambda}\in [0,R_{1,\lambda}]$ for all  $\lambda \in [0,\lambda^{*}]$. This completes the proof.
\end{proof}

\vspace{0.5 cm}
\noindent
In Lemma \ref{LH} we present a simple quantified stability criteria for parametric Hill's equation of the form
\begin{equation}\label{Hequation}
\ddot{x}+q(t,\lambda)x=0,
\end{equation}
with $q\in C(\R\times [0,\Lambda])$ and $T$-periodic in $t$, when $|\Delta(0)|=2$, with $\Delta(\lambda)$ is the discriminant function, defined  as the trace of a monodromy matrix for the associated first order system to (\ref{Hequation}).

\begin{lemma}\label{LH} Consider the Hill's equation (\ref{Hequation})
where $q\in C^{3}(\R\times [0,\Lambda])$ and $T$-periodic in $t$. Let $\Delta(\lambda)$ the discriminant function of $(\ref{Hequation})$ that satisfies
\[
\Delta(0)=2, \quad \Delta^{\prime}(0)=0, \quad \text{and}\quad \Delta^{\prime \prime}(0)\neq 0.
\]
Define
\[
p(\lambda)=\mathcal{K}\lambda^{3}-3\Delta^{\prime \prime}(0)\lambda^{2}-24, \quad \text{and} \quad \mu=\frac{3|\Delta^{\prime \prime}(0)|}{\mathcal{K}},
\]
where $\mathcal{K}$ a positive constant such that $\displaystyle{\mathcal{K}\geq \sup_{\lambda\in [0,\Lambda]}|\Delta^{\prime \prime \prime}(\lambda)|}$.
\begin{enumerate}
\item [i)] If $\Delta^{\prime \prime}(0)>0$ then $\Delta(\lambda)>2$ for all $\lambda \in I_{1}=]0,\min\left\{\mu,\Lambda\right\}[$,
\item [ii)] If $\Delta^{\prime \prime}(0)<0$  and $p(\Lambda)>0$ (resp. $p(\Lambda)\leq 0$) then $|\Delta(\lambda)|<2$ for all $\lambda \in I_{2}=]0,\min\left\{\mu,\mu_{0},\Lambda\right\}[$ (resp. $\lambda \in I_{1}$) where $\displaystyle{\mu_{0}}$ is the unique positive root of $\displaystyle{p(\lambda)=0}$.
\end{enumerate}

\end{lemma}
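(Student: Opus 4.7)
The plan is to apply Taylor's theorem with Lagrange remainder to $\Delta(\lambda)$ around $\lambda=0$. Since $q\in C^{3}(\R\times[0,\Lambda])$, standard smooth dependence of solutions on parameters makes $\Delta$ a $C^{3}$ function of $\lambda$, so for every $\lambda\in[0,\Lambda]$ there is $\xi\in(0,\lambda)$ with
\[
\Delta(\lambda)=\Delta(0)+\Delta'(0)\lambda+\tfrac{1}{2}\Delta''(0)\lambda^{2}+\tfrac{1}{6}\Delta'''(\xi)\lambda^{3}.
\]
Using the hypotheses $\Delta(0)=2$, $\Delta'(0)=0$ and the uniform bound $|\Delta'''|\le\mathcal{K}$, this yields the two-sided estimate
\[
2+\tfrac{1}{2}\Delta''(0)\lambda^{2}-\tfrac{\mathcal{K}}{6}\lambda^{3}\;\le\;\Delta(\lambda)\;\le\;2+\tfrac{1}{2}\Delta''(0)\lambda^{2}+\tfrac{\mathcal{K}}{6}\lambda^{3}.
\]
Everything else is just reading off when the outer expressions lie in $(-2,2)$ or above $2$.

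For part \textit{i)} with $\Delta''(0)>0$, I would use only the lower bound: $\Delta(\lambda)>2$ is equivalent to $\tfrac{1}{2}\Delta''(0)\lambda^{2}-\tfrac{\mathcal{K}}{6}\lambda^{3}>0$, i.e.\ $\lambda^{2}\bigl(\tfrac{1}{2}\Delta''(0)-\tfrac{\mathcal{K}}{6}\lambda\bigr)>0$. This holds precisely for $0<\lambda<3\Delta''(0)/\mathcal{K}=\mu$, and intersecting with $[0,\Lambda]$ gives $\lambda\in I_{1}$.

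For part \textit{ii)} with $\Delta''(0)<0$, I need both $\Delta(\lambda)<2$ and $\Delta(\lambda)>-2$. The upper bound in the Taylor estimate gives $\Delta(\lambda)<2$ iff $\tfrac{1}{2}\Delta''(0)\lambda^{2}+\tfrac{\mathcal{K}}{6}\lambda^{3}<0$; because $\Delta''(0)=-|\Delta''(0)|$, this is equivalent to $\lambda<3|\Delta''(0)|/\mathcal{K}=\mu$. The lower bound gives $\Delta(\lambda)>-2$ iff $4+\tfrac{1}{2}\Delta''(0)\lambda^{2}-\tfrac{\mathcal{K}}{6}\lambda^{3}>0$, and multiplying by $6$ yields exactly $p(\lambda)<0$.

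It remains to analyse $p(\lambda)=\mathcal{K}\lambda^{3}-3\Delta''(0)\lambda^{2}-24$. Since $p(0)=-24<0$ and $p'(\lambda)=3\lambda\bigl(\mathcal{K}\lambda-2\Delta''(0)\bigr)$ is nonnegative on $[0,\infty)$ (here I use $\Delta''(0)<0$), $p$ is strictly increasing on $[0,\infty)$ and tends to $+\infty$, so it has a unique positive root $\mu_{0}$ and $p(\lambda)<0$ iff $\lambda<\mu_{0}$. If $p(\Lambda)\le 0$ then $\mu_{0}\ge\Lambda$, so the constraint $p(\lambda)<0$ is automatic on $[0,\Lambda[$ and the binding condition is $\lambda<\min\{\mu,\Lambda\}$, giving $I_{1}$. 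If $p(\Lambda)>0$ then $\mu_{0}<\Lambda$ and the admissible set is $\lambda<\min\{\mu,\mu_{0},\Lambda\}$, giving $I_{2}$. No step is technically hard; the only point requiring care is the sign bookkeeping around $\Delta''(0)$ and the absolute value in the definition of $\mu$, together with the monotonicity argument for $p$ that validates the uniqueness of $\mu_{0}$ stated in the lemma.
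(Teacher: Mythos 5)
Your proposal is correct and follows essentially the same route as the paper: a Taylor expansion of $\Delta$ at $\lambda=0$ with the remainder bounded by $\mathcal{K}\lambda^{3}/6$, followed by reading off the sign conditions, with the second inequality in case \textit{ii)} reduced to $p(\lambda)<0$ and the monotonicity of $p$ handling the root $\mu_{0}$. Your explicit computation of $p'(\lambda)$ merely makes precise the monotonicity claim the paper asserts without detail; otherwise the two arguments coincide.
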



\begin{proof}
For the classical Taylor's expansion over $\Delta(\lambda)$ in $[0,\Lambda]$ we have
\begin{equation}\label{taylor-2}
\Delta(\lambda)=2+\frac{\Delta^{\prime \prime}(0)\lambda^{2}}{2}+R(\lambda), \quad \forall \lambda\in [0,\Lambda],
\end{equation}
where $R(\lambda)$ is the remainder bounded by 
\[
|R(\lambda)|\leq \frac{\lambda^{3}\mathcal{K}}{3!}.
\]
Assume that $\Delta^{\prime \prime}(0)>0$ then  
\[
\frac{\Delta^{\prime \prime}(0)\lambda^{2}}{2}-\frac{\mathcal{K}\lambda^{3}}{3!}>0, \quad \text{if} \quad 0<\lambda<\mu.
\]

\noindent
In consequence, from (\ref{taylor-2}) and the estimative over $R(\lambda)$, we obtain
\begin{equation}
\Delta(\lambda)-2=\frac{\Delta^{\prime \prime}(0)\lambda^{2}}{2}+R(\lambda)>\frac{\Delta^{\prime \prime}(0)\lambda^{2}}{2}-\frac{\mathcal{K}\lambda^{3}}{3!}>0,
\end{equation}
if $\lambda \in I_{1}$, proving $i)$. 

\vspace{0.5 cm}
\noindent
Now, we suppose $\Delta^{\prime \prime}(0)<0$. Notice that $|\Delta(\lambda)|<2$ is equivalent to 
\[
-4<\Delta(\lambda)-2<0.
\]
Therefore, it is sufficient solve the following system of inequalities
\begin{equation}\label{desigualdades}
\begin{split}
\frac{\Delta^{\prime \prime}(0)\lambda^{2}}{2}+\frac{\mathcal{K}\lambda^{3}}{3!}&< 0,\\
\frac{\mathcal{K}\lambda^{3}}{3!}-\frac{\Delta^{\prime \prime}(0)\lambda^{2}}{2}&< 4,
\end{split}
\end{equation}
for $\lambda \in [0,\Lambda]$. The first inequality in (\ref{desigualdades}) is equivalent to $\lambda\in I_{1}$. The second one can be rewritten as $p(\lambda)<0.$  Notice that $p(\lambda)$ is a strictly increasing function for all $\lambda \in ]0,\Lambda]$. Then, if $p(\Lambda)\leq 0$ the second inequality holds for $\lambda \in I_{1}.$ Else, $p(\Lambda)>0$ then $p(\lambda)<0$ for all $\lambda\in [0,\mu_{0}[$ with $p(\mu_{0})=0$ proving $ii)$.
\end{proof}

\vspace{0.5 cm}
\noindent
\textbf{Remarks.} 
\begin{enumerate}
\item Recall that in the case $|\Delta(\lambda)|<2$ the equation (\ref{Hequation}) is called \textit{Elliptic}, in such a case all solutions are bounded in the $C^{1}$-norm.  If $|\Delta(\lambda)|>2$ the Hill's equation (\ref{Hequation}) is called \textit{hyperbolic}, in such a case there exists a nontrivial unbounded solution $x_{\lambda}(t)$ of (\ref{Hequation}). Finally, when $|\Delta(\lambda)|=2$ (\textit{Parabolic case}) all solutions are $C^{1}$-bounded if and only if the associated monodromy matrix is $\pm \mathbb{I}_{2}$, with $\mathbb{I}_{2}$ the identity matrix of second  order.

\item Lemma \ref{LH} can be viewed as a generalized and quantified version of classical stability results for parametric Hill's equation with potential $q(t,\lambda)=q(t)+\lambda$ (see \cite{Magnus-Winkler}).
\end{enumerate}
 

\section{Quantifying the bifurcating families from the circular Sitnikov  problem}

Let fix a natural number $N$. The aim of this section is to present the study of the families of even and $2N\pi$ periodic solutions of the Sitnikov problem (\ref{Sit}) parametrized  by the eccentricity  from the quantified point of view, i.e., each family is presented as a graphic of the initial condition as a function of $e$ in a computable interval. This requirement will be essential for the study of the linear stability of this families in our approach. As a by product we will obtain  \textit{ a posteriori} bounds of this families that could be used for the nonlinear stability analysis which is out of the scope of this work. 

\vspace{0.5 cm}
\noindent
Consider the boundary value problem (\ref{bvp}). For given $\xi$, $\eta$ $\in \R$ and $e\in [0,1[$ let $z(t;\xi,e)$ be the solution  of (\ref{Sit}) satisfying the initial conditions
\[
z(0)=\xi, \quad \dot{z}(0)=0.
\]

\noindent
This solution is real analytic in the arguments $(t,\xi,e)\in \R\times \R\times [0,1[$ and is globally defined in $\R$ since the nonlinearity in (\ref{Sit}) is real analytic and bounded. The shooting method allows us to search for even and $2N\pi$-periodic solutions of (\ref{Sit}) by studying the zeros of the function  
\[
F_{N}:\R\times [0,1[\rightarrow \R, \quad F_{N}(\xi,e)=\dot{z}(N\pi;\xi,e).
\]
\noindent
We denote by $\Sigma$ the set of zeros of $F_{N}$, i.e.
\[
\Sigma=\big\{(\xi,e):F_{N}(\xi,e)=0\big\}.
\]
It is a well known fact that $\Sigma$ has  nice topological properties (\cite{Llibre-Ortega, Ortega-Rivera}). First,  for a fixed $e^{*}\in [0,1[$ the section
\[
\Sigma_{e^{*}}=\big\{(\xi,e^{*}):F_{N}(\xi,e^{*})=0\big\},
\] 
is finite (see Proposition 2 in \cite{Ortega-Rivera}). This implies that for each fixed $e\in [0,1[$ there exist a finite number of even sub-harmonics of (\ref{Sit}). Secondly,  $\Sigma$ is bounded (see Proposition 5.1 of \cite{Llibre-Ortega}). More precisely, there exists a positive $\xi_{*}$ such that if $z(t)$ is a even $2N\pi$ periodic solution of (\ref{Sit}) then $|z(t)|<\xi_{*}$ for all $t\in \R.$ 
For instance a numerical computation shows that if $e\in [0, 0.99]$ and $N=1$ then $\xi_{*}=1.99$ (see section 5). Finally every connected  subset of $\Sigma$ is arcwise connected. This corresponds to the intuitive idea of continuation of zeros.


\vspace{0.5 cm}
\noindent
Since $F_{N}$ is odd in $\xi$, the set $\Sigma$ is symmetric with respect to the $\xi$-axis, in consequence it is enough to consider the region 
\[
\Sigma^{+}=\left\{(\xi,e) \in \R^{+}\times [0,1[: F_{N}(\xi,e)=0\right\},
\]
on the right half plane.

\vspace{0.5 cm}
\noindent
In the case $e=0$ (\textsl{The circular Sitnikov problem}) following the results in \cite{Llibre-Ortega} the set $\mathcal{Z}_{0}=\Sigma^{+}\cap\left\{e=0\right\}$ is given by
\[
\mathcal{Z}_{0}=\big\{\xi_{1},\ldots, \xi_{\nu}\big\},
\]
where $\nu:=\nu_{N}=[2\sqrt{2}N]$ and $\xi_{p}$ (with $\xi_{p}>\xi_{q}$ for $q>p$) is the initial condition of the solution $z(t;\xi_{p},0)$  for the Cauchy problem
\begin{equation}\label{csitnikov}
\ddot{z}=-\frac{z}{(z^{2}+1/4)^{3/2}}, \qquad
z(0)=\xi_{p}, \quad \dot{z}(0)=0.
\end{equation}

\noindent
Moreover $\varphi_{p}(t)=z(t;\xi_{p},0)$ has $p$ zeros in $[0,N\pi]$. Therefore, there exists $\nu_{N}$ nontrivial, even and $2N\pi$ periodic solutions in the circular Sitnikov problem  with
\[
\varphi_{1}(0)=\xi_{1}> \varphi_{2}(0)=\xi_{2}>\cdots >\varphi_{\nu}(0)=\xi_{\nu},
\]
labelled according to its number of zeros, going from  $p=1$ to $\nu$. Also in \cite{Llibre-Ortega} the authors prove that Brouwer index of $F_{N}(\xi,0)$ in $\xi_{p}$ denoted by $\text{ind}(F_{N}(\cdot,0),\xi_{p})$ satisfies
\[
\text{ind}(F_{N}(\cdot,0),\xi_{p})=(-1)^{p}.
\]

\noindent
From here it follows that there exists a local branch emanating from $(\xi_{p},0)$ which is the graph of a smooth function $\xi=\mathcal{H}(e)$ with $\xi(0)=\xi_{p}$ for small values of $e$ (See figure 1).

\begin{figure}[h]
\begin{center}
\includegraphics[scale=0.5]{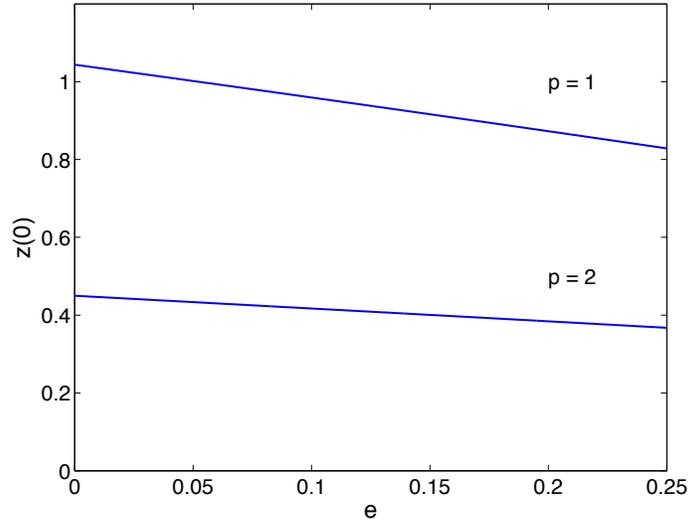}
\caption{Bifurcating families from the circular case for $N=1$ and $p=1$ $p=2$.}
\end{center}
\end{figure}

\vspace{0.5 cm}
\noindent
From now on we fix $0<E<1$ and we shall study the existence of nontrivial solutions of the implicit equation 
\begin{equation}\label{ecuacion implicita}
F_{N}(\xi(e),e)=0, 
\end{equation}
with $(\xi,e)\in [0,\xi_{*}]\times [0,E]$. The equation (\ref{ecuacion implicita}) near to $\xi_{p}$ could be thought (by implicit derivation) as the following Cauchy problem
\begin{equation}\label{ec}
\left\{
\begin{aligned}
\frac{d\xi}{d e}&=h(e,\xi)\\
\xi(0)&=\xi_{p},
\end{aligned}
\right.
\end{equation}
where the function $h(e,\xi)$ is given by
\begin{equation}\label{funcion h}
h(e,\xi)=-\frac{\partial_{e}F_{N}(\xi,e)}{\partial_{\xi}F_{N}(\xi,e)}.
\end{equation} 
Notice that the right hand side in (\ref{ec}) contains the following derivatives of the flow respect to the initial conditions and parameters
\begin{equation*}
\begin{split}
\partial_{e}F_{N}(\xi,e)&=\partial_{e}\dot{z}(N\pi,\xi,e), \\
\partial_{\xi}F_{N}(\xi,e)&=\partial_{\xi}\dot{z}(N\pi,\xi,e).
\end{split}
\end{equation*}
For now on we call the differential equation in (\ref{ec}) the \textit{continuation equation.}

\vspace{0.5 cm}
\noindent
The \textit{continuation equation} makes sense in an open region $\mathcal{U}$ (relative  to $[0,1[\times [0,\infty[ $) where $\partial_{\xi}\dot{z}(N\pi,\xi,e)\neq 0$. So we are interested in a rectangle $\Omega_{p}=[0,E] \times [\xi_{p}-\Delta,\xi_{p}+\Delta]\subset \mathcal{U},$ where $\Delta$ and $E$ will be parameters to be determined. The objectives are to solve (\ref{ec}) in $\Omega_{p}$ starting from  $p=1$ to $\nu$  to obtain:

\begin{itemize}
\item A solution $\xi=\mathcal{H}(e)$ with domain quantified. 
\item Explicit bounds $\left\|Z_{e}\right\|_{\infty}$ for the corresponding even periodic solution $\displaystyle{Z_{e}(t)=z(t;\mathcal{H}(e),e)}$.
\end{itemize}

\noindent
With this in mind, we define $\xi_{0}:=\xi_{*}$ and $\xi_{\nu+1}:=0$ and consider 
\begin{equation}\label{delta}
0<\Delta\leq \Delta_{*}, \quad 0< E\leq E_{*},
\end{equation}
where
\begin{equation}\label{delta-1}
\Delta_{*}=\min\big\{\xi_{p}-\xi_{p+1}: 0\leq p\leq \nu \big\}, \quad E_{*}=0.99.
\end{equation}

This allows us to isolate each initial zero $\xi_{p}$ in each rectangle $\Omega_{p} \subseteq [0,1[ \times [0,\xi_{*}].$

\vspace{0.5 cm}
\noindent
On this approach will lead us to the following main result that will be proved in the subsection 3.3.

\begin{theorem}\label{main theorem} Given a integer $N\geq 1$ and $p=1,\cdots, \cdots [2\sqrt{2}N]$ there exist a constant $e^{*}_{N,p}\in\,]0,1[$, an a affine function $\mathcal{G}:[0,e^{*}_{N,p}]\mapsto \R^{+}$ and a smooth function $\xi=\mathcal{H}_{N,p}(e)$, $e\in [0,e^{*}_{N,p}[$ with $\mathcal{H}_{N,p}(0)=\xi_{p}$ such that $Z^{N}_{e,p}(t)=z(t;\mathcal{H}_{N,p}(e),e)$, is an even $2N\pi$ periodic solution of (\ref{bvp}) with
\[
|Z^{N}_{e,p}|\leq \mathcal{G}(e),
\]
where  $\mathcal{G}(e)=\xi_{p}+\gamma e$ and $\gamma=\gamma_{N,p}$ is a constant that can be  explicitly computed (see (\ref{final bound})).
\end{theorem}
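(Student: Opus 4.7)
Fix $N\geq 1$ and $p\in \{1,\ldots,[2\sqrt{2}N]\}$. My strategy is to realize the branch $\xi=\mathcal{H}_{N,p}(e)$ emanating from $(\xi_p,0)$ as the maximal solution of the Cauchy problem (\ref{ec}) on an explicit rectangle $\Omega_p'\subseteq \Omega_p$, and then quantify every constant via Lemma \ref{control hill} applied to the variational equation of (\ref{Sit}) along the reference flow.

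The first step is to show that $\partial_\xi F_N$ is bounded away from zero on a suitable $\Omega_p'$. Differentiating (\ref{Sit}) in $\xi$, the function $y(t)=\partial_\xi z(t;\xi,e)$ solves
\[
\ddot y + a(t;\xi,e)\,y=0,\qquad a(t;\xi,e)=\frac{r(t,e)^2 - 2\,z(t;\xi,e)^2}{\bigl(z(t;\xi,e)^2+r(t,e)^2\bigr)^{5/2}},
\]
with $y(0)=1$, $\dot y(0)=0$, so that $\partial_\xi\dot z(N\pi;\xi,e)=\dot\phi_1(N\pi;\xi,e)$ where $\phi_1,\phi_2$ are the canonical solutions. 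Analogously, $\partial_e\dot z(N\pi;\xi,e)$ can be written by variation of parameters as an integral involving $\phi_1,\phi_2$ and $\partial_e r$. Applying Lemma \ref{control hill} with parameter $\lambda=e$ yields uniform bounds $\|\phi_i\|_\infty,\|\dot\phi_i\|_\infty \leq R_{1,e}$ on $\Omega_p'$. Combined with the fact that $\xi_p$ is a simple zero of $F_N(\cdot,0)$, which is forced by $\text{ind}(F_N(\cdot,0),\xi_p)=(-1)^p\neq 0$, this delivers an explicit positive lower bound on $|\partial_\xi F_N|$ and an explicit upper bound on $|\partial_e F_N|$, hence an explicit bound $|h(e,\xi)|\leq \gamma$ on $\Omega_p'$.

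Next, the Picard-Lindel\"of theorem applied to (\ref{ec}) produces a unique $C^1$ solution $\mathcal{H}_{N,p}$ that remains inside $\Omega_p'$ for as long as $\gamma e\leq \Delta'$, where $\Delta'$ denotes the $\xi$-half-width of $\Omega_p'$. Setting
\[
e^*_{N,p}=\min\!\Bigl\{E_*,\; \Delta'/\gamma\Bigr\},
\]
we obtain $|\mathcal{H}_{N,p}(e)-\xi_p|\leq \gamma e$, so that $Z^N_{e,p}(t)=z(t;\mathcal{H}_{N,p}(e),e)$ is the sought even $2N\pi$-periodic solution. For the pointwise bound $|Z^N_{e,p}(t)|\leq \mathcal{G}(e)=\xi_p+\gamma e$, I would use a perturbed-energy argument: at $e=0$, conservation of the energy $\dot z^2/2 - 1/\sqrt{z^2+1/4}$ gives $|\varphi_p(t)|\leq \varphi_p(0)=\xi_p$, and for $e>0$ the non-autonomous correction $r\dot r/(z^2+r^2)^{3/2}$ can be estimated linearly in $e$ using the same $R_{1,e}$ bounds, and then absorbed into $\gamma$ by enlarging it if necessary.

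The main obstacle is making all the constants mutually consistent and explicit. Verifying the hypothesis of Lemma \ref{control hill}, namely the existence of two consecutive positive roots of $Q(e,R)=2TeR^3 d(e,R)-R+r_0$ on $[0,e^*_{N,p}]$, forces $e^*_{N,p}$ to be small, which feeds back through $R_{1,e}$ into $\gamma$ and into the required rectangle width $\Delta'$. Tuning $r_0$, $\Delta'$, $\gamma$ and $e^*_{N,p}$ so that all these inequalities close simultaneously is the delicate book-keeping step, and it must be performed separately for each pair $(N,p)$ in order to obtain the numerically explicit constants promised in the statement.
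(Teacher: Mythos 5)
Your overall architecture matches the paper's: bound the canonical solutions of the variational equation via Lemma \ref{control hill} with $\lambda=e$, deduce an upper bound for $|\partial_e F_N|$ and a lower bound for $|\partial_\xi F_N|$ on a rectangle about $(\xi_p,0)$, solve the continuation equation (\ref{ec}) there by an existence theorem for ODEs, and take $e^*_{N,p}$ as the minimum of the various thresholds and of the exit value $\Delta_*/M$ of the rectangle. Two points, however, deserve attention.

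First, there is a genuine gap in your lower bound for the denominator. You assert that $\xi_p$ is a simple zero of $F_N(\cdot,0)$ because $\text{ind}(F_N(\cdot,0),\xi_p)=(-1)^p\neq 0$. In one dimension a nonzero Brouwer index only records a sign change; it does not imply that the derivative is nonzero (the map $F(\xi)=(\xi-\xi_p)^3$ has index $+1$ at a degenerate zero). The implication in \cite{Llibre-Ortega} runs the other way: one first proves the nondegeneracy $\partial_\xi\dot z(N\pi;\xi_p,0)=\dot\phi_1(N\pi,0)\neq 0$ (from the monotonicity of the period function of the circular problem; the paper invokes Theorems 1.1--1.2 of \cite{Magnus-Winkler} for the corresponding statement in Section 4), and only then identifies the index with the sign of that derivative. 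Without an independent proof of this fact your lower bound on $|\partial_\xi F_N|$, and hence the entire Picard--Lindel\"of step, does not get off the ground. The paper encodes precisely this input in the restriction $(\mathbf{R})$, namely $e\Psi<|\dot\phi_1(N\pi,0)|$, which presupposes $\dot\phi_1(N\pi,0)\neq 0$.

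Second, your route to the pointwise bound $|Z^N_{e,p}|\leq\xi_p+\gamma e$ genuinely differs from the paper's. The paper applies the Mean Value Theorem in $e$ to $Z^N_{e,p}(t)=z(t;\mathcal{H}(e),e)$, using the chain rule $\partial_e Z=\partial_\xi z\cdot\mathcal{H}'(e)+\partial_e z$ together with the bounds $M$ and $\mathcal{R}$ already in hand, which yields $\gamma=M\mathcal{R}+3N\pi\sigma^*\mathcal{R}^2/(2(1-e^*))$ at no extra cost. Your perturbed-energy argument is viable in principle (the energy drift over $[0,N\pi]$ is $O(e)$ since $\partial_t r=O(e)$, and the amplitude estimate closes because $\dot z(0)=0$), but turning an $O(e)$ energy defect into an $O(e)$ amplitude bound introduces new constants not already controlled by Lemma \ref{control hill} and requires care with the $e$-dependence of $r(0,e)$; given the machinery already built, the paper's MVT route is the more economical and is the one you should expect to carry out in practice.
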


\subsection{Bounds for  the variational equation}
From now on we consider fix an integer $N\geq 1$ and $p=1$ to $\nu=[2\sqrt{2}N]$. We rewrite the equation (\ref{Sit}) in the form 
\begin{equation}\label{nsitnikov}
\ddot{z}+f(t,z,e)=0, \qquad f(t,z,e)=\frac{z}{(z^{2}+r(t,e)^{2})^{3/2}}.
\end{equation}

\noindent
An elementary computations shows that the first variational equation asso\-ciated to (\ref{nsitnikov}) is
\begin{equation}\label{var}
\ddot{y}+a_{\xi,e}(t)y=0, \quad a_{\xi,e}(t):=\frac{r(t,e)^{2}-2z^{2}}{\big(z^2+r^2(t,e)\big)^{5/2}},
\end{equation}
with $z=z(t,\xi,e)$ and $t\in [0,N\pi]$.

\vspace{0.5 cm}
\noindent
In order to find explicit uniform bounds for the numerator and the denominator in (\ref{ec}),  we need to find a uniform bound for the canonical solutions of the variational equation (\ref{var}). For this purpose we will apply the Lemma \ref{LH} to the equation (\ref{var}) with $\xi \in [0,\xi_{*}]$. 

\vspace{0.5 cm}
\noindent
After several computations (see Appendix 1) we have
\begin{equation}\label{ub}
\left|\frac{\partial a_{\xi,e}}{\partial e}\right|\leq \frac{6\sigma\left(1+3N\pi\sigma R_{e}^{2}\right)}{(1-e)^{2}}, 
\end{equation}
for all $t\in [0,N\pi]$, where $R_{e}$ is given as in the Lemma \ref{control hill} taking $\lambda=e$, and 
\begin{equation}\label{sigma}
\sigma=\sigma(e)=\frac{16}{(1-e)^{3}},
\end{equation}
In consequence we can take the function $d(e,R)$ as
\[
d(e,R)=\frac{6\sigma\left(1+3N\pi\sigma R^{2}\right)}{(1-e)^{2}}, 
\]

\noindent
which verifies the assumptions \textit{1.} in Lemma \ref{control hill}. Straightforward computations gives the following expression for the function $Q(e,R)$ 
\[
Q(e,R)=\widetilde{Q}(e,R)+r_{0} \quad \text{where}\quad  \widetilde{Q}(e,R)=b_{1}(e)R^{5}+b_{2}(e)R^{3}-R
\]
with
\[
b_{1}(e)=\frac{9216 (N\pi)^{2}e}{(1-e)^{8}}, \quad b_{2}(e)=\frac{192 N\pi e}{(1-e)^{5}}.
\]
and 
\begin{equation}\label{cota r}
r_{0}=\sup_{\xi\in [0,\xi_{*}]}{R_{0}(\xi)},
\end{equation}
where $R_{0}=R_{0}(\xi)$ is given  by (\ref{R}) for the equation (\ref{var}) with $e=0$. For instance in the case $N=1$, $e\in [0,0.99]$ we have 
\[
\xi_{*}=1.999901.. \quad r_{0}=6.621635..
\]

\noindent
In order to check the assumptions \textit{2.} we present some properties of the function $\widetilde{Q}$.
\begin{itemize}
\item For all $e\in [0,E]$ all roots of $\widetilde{Q}(e,\cdot)$ has two real roots different from zero (one positive and one negative for $e>0$) and they are simple. This follows directly by the positivity of the coefficients $b_{1}(e), b_{2}(e)$.
\item Let $R^{*}(e)$ the first positive root of $\widetilde{Q}$ given by
\[
R^{*}(e)=\sqrt{y^{*}(e)}, \quad y^{*}(e)=\frac{\sqrt{b^{2}_{2}(e)+4b_{1}(e)}-b_{2}(e)}{2b_{1}(e)}
\]
Notice that $\displaystyle{\lim_{e\searrow 0}R^{*}(e)=\infty}.$
\item For $R$ positive, and $e\in [0,E]$ the minimum  value $\widetilde{Q}_{m}(e)$ of $\widetilde{Q}(e,R)$ is given by
\[
\widetilde{Q}_{m}(e)=\widetilde{Q}(e,R_{m}(e))=-2R^{3}_{m}(e)\big(2b_{1}(e)R^{2}_{m}(e)+b_{2}(e)\big),
\]
with $R_{m}(e)$ given by 
\[
R^{2}_{m}(e)=\frac{\sqrt{9b^{2}_{2}(e)+20b_{1}(e)}-3b_{2}(e)}{10b_{1}(e)}.
\]

Notice that 
\[
\lim_{e\to 0}R_{m}(e)=\infty, \quad \lim_{e\to 1}R_{m}(e)=0,
\]
therefore
\[
\lim_{e\to 0}\widetilde{Q}_{m}(e)=-\infty, \quad \lim_{e\to 1}\widetilde{Q}_{m}(e)=0.
\]
Hence, there exits a critical value $E^{*}\in \,]0,1[$ such that $\displaystyle{\widetilde{Q}_{m}(E^{*})=-r_{0}}$
More precisely, $E^{*}$ is the positive root of 
\begin{equation}\label{root}
\widetilde{Q}_{m}(e)=-r_{0}
\end{equation}

This implies $\displaystyle{\widetilde{Q}_{m}(e)<-r_{0}}$ for all $\displaystyle{e\in [0,E^{*}[},$ and therefore there exist exactly two positive roots $\displaystyle{R_{1,e}, R_{2,e}}$ of $\displaystyle{\widetilde{Q}(e,R)=-r_{0}}$ with

\[
\displaystyle{R_{1,e}<R_{m}(e)<R_{2,e}}
\]
verifying 
\begin{equation*}
\begin{split}
Q(e,R)>-r_{0}&\quad (\, \text{i.e.} \, Q(e,R)>0) \quad \forall R \in [0,R_{1,e}[\\
Q(e,R)<-r_{0}&\quad (\, \text{i.e.} \, Q(e,R)<0) \quad \forall R \in \,]R_{1,e},R_{2,e}[
\end{split}
\end{equation*}

\end{itemize}





\noindent
Moreover, for $e\in [E^{*},1[$ it holds $\widetilde{Q}_{m}(e)>-r_{0}$ and therefore $Q(e,R)>0$ for all $R>0$. 

On the other hand,  $r_{0}\leq R_{1,e}$ for all $e\, [0,E^{*}]$. In fact, since $b_{1}(e),b_{2}(e)>0$ for $e>0$ on has 
\[
Q(e,R)>-R+r_{0}, \quad \forall e\in ]0,1[,\quad  R>0,
\]
then
\[
0=Q(e,R_{1,e})>-R_{1,e}+r_{0} \quad \Rightarrow \quad R_{1,e}>r_{0}.
\]
Finally the assumption 2. (part (b)) is fulfilled for all $e\in [0,E^{*}[$ and in consequence 
\begin{equation}\label{canonic bound}
R_{e}\leq R_{m}(E^{*}), \quad \forall e \in \,[0,E^{*}]
\end{equation}

\noindent
From now on, in the rest of the paper we use the following notation
\[
\mathcal{R}=R_{m}(E^{*}).
\]

\begin{figure}[h]
\begin{center}
\includegraphics[scale=0.5]{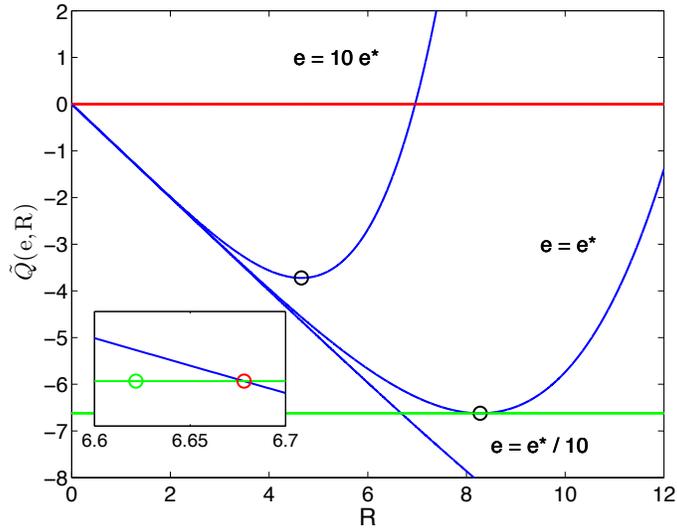}
\caption{Function $\widetilde{Q}(e,R)$ and the  behaviour of the solutions of the equation $\widetilde{Q}(e,R)=-r_{0},$  i.e., $Q(e,R)=0$.}
\end{center}
\end{figure}

\subsection{Bounds for the continuation equation}

In this part of the manuscript we present an explicit positive upper bound for $\partial_{\xi}F_{N}(\xi,e)$, the numerator on the continuation equation  (\ref{ec}) and also a positive lower bound for  $\partial_{\xi}F_{N}(\xi,e)$, the respective denominator. Remember that we have fixed $N\geq 1$ and $\displaystyle{p=1}$ to $\nu$. 

\vspace{0.5 cm}
\noindent
We star with the numerator, with this in mind, notes that $\beta(t)=\displaystyle{\partial_{e}z(t,\xi,e)}$ solves the initial value problem
\begin{equation}\label{var2}
\begin{split}
&\ddot{y}+a_{\xi,e}(t)y=p(t,\xi,e),\\
&y(0)=\dot{y}(0)=0.
\end{split}
\end{equation}
with $\displaystyle{p(t,\xi,e)=-\partial_{e}f(t,z,e)}$, $z=z(t,\xi,e)$ and $\xi\in ]\xi_{p}-\Delta^{*},\xi_{p}+\Delta^{*}[$. Using (\ref{kepler}) the computations show that
\begin{equation}\label{funcion p}
p(t,\xi,e)=\frac{3rz}{2(z^{2}+r^{2})^{5/2}}\Big(\cos u-e\frac{\sin^{2}u}{2r}\Big).
\end{equation}

\noindent
From (\ref{kepler}) the function $r(t,e)$ satisfies 
\begin{equation}\label{cota kepler}
\frac{1-e}{2}\leq r(t,e)\leq \frac{1+e}{2},
\end{equation}
for all $(t,e)\in \R\times [0,1[$, therefore we get
\begin{equation*}\label{norma de p}
|p(t,\xi,e)|\leq \frac{3}{2(z^{2}+r^{2})^{3/2}}\Big(1+\frac{e}{2r}\Big),
\end{equation*}
in consequence
\begin{equation}\label{norma de p}
\left\|p\right\|_{\infty}\leq\frac{12}{(1-E)^{4}}.
\end{equation}

\vspace{0.5 cm}
\noindent
From the method of variations of parameters, we obtain
\begin{equation}\label{ecuacion integral}
\beta(t)=\int_{0}^{t}G(t,s,e)p(s,\xi,e)ds,
\end{equation}
where $\displaystyle{G}(t,s,e)=\phi_{1}(s,e)\phi_{2}(t,e)-\phi_{1}(t,e)\phi_{2}(s,e)$ and $\phi_{1}$ and $\phi_{2}$ are the solutions of (\ref{var}) satisfying the initial conditions
\[
y(0)=1,\quad  \dot{y}(0)=0 \quad \text{and} \quad y(0)=0, \quad  \dot{y}(0)=1,
\]
respectively. With the previous results we are able to present a uniform bound on $\Omega_{p}$ for the function $\dot{\beta}(t)$ on $[0,N\pi].$ Integrating over the equation (\ref{var2}) we arrive to
\[
\dot{\beta}(t)=-\int_{0}^{t}a_{\xi,e}(s)\beta(s)ds+\int_{0}^{t}p(s,\xi,e)ds.
\]
From (\ref{ub}) and (\ref{sigma}) we get
\begin{equation*}
|\dot{\beta}(N\pi)|\leq \frac{12(N\pi)}{(1-e)^{4}}\Big(1+N\pi\sigma \left\|G(\cdot,\cdot,e)\right\|_{\infty}\Big),
\end{equation*}
valid  for all $(e,\xi)\in \Omega_{p}=[0,E^{*}]\times [\xi_{p}-\Delta_{*},\xi_{p}+\Delta_{*}]$. Here $\left\|G(\cdot,\cdot,e)\right\|_{\infty}$ is defined as
\[
\left\|G(\cdot,\cdot,e)\right\|_{\infty}=\sup_{[0,N\pi]^{2}}|G(\cdot,\cdot,e)|.
\]
From (\ref{canonic bound}) it follows $\left\|G(\cdot,\cdot,e)\right\|_{\infty}\leq 2\mathcal{R}^{2}$, in consequence we obtain a uniform bound over the numerator of the continuation equation (\ref{ec}) on $\Omega_{p}$ as follows

\begin{equation}\label{cota numerador}
\big|\partial_{e}F_{N}(\xi,e)\big|\leq \frac{12 N\pi}{(1-E^{*})^{4}}\Big(1+2N\pi\sigma^{*} \mathcal{R}^{2}\Big):=\Upsilon.
\end{equation}
with $\sigma^{*}=\sigma(E^{*})$  given by (\ref{sigma}).

\vspace{0.5 cm}
\noindent
For the denominator, we proceed as before.  Notice that $\phi_{1}(\cdot,e)=\partial_{\xi}z(\cdot,\xi,e)$, is the canonical solution of (\ref{var}) that satisfies the initial conditions
\[
y(0)=1, \quad \dot{y}(0)=0.
\]
By the Mean Value Theorem we obtain 
\[
\big|\dot{\phi_{1}}(t,e)-\dot{\phi}_{1}(t,0)\big| \leq \max_{e \in [0,E^{*}]} \|\partial_{e}\dot{\phi}_{1}(\cdot,\,e)\|_{\infty}\,e,
\]
\noindent
for all $t\in [0,N\pi]$. The function $\partial_{e}\phi_{1}(t,e)$ satisfies the Cauchy problem \begin{equation}\label{cp}
\left\{
\begin{aligned}
\ddot{y}+a_{\xi,e}(t)y&=-\partial_{e}a_{\xi,e}(t)\phi_{1}(t,e)\\
y(0)=\dot{y}(0)&=0.
\end{aligned}
\right.
\end{equation}

\noindent
In consequence
\begin{equation}
\partial_{e}\dot{\phi}_{1}(t,e)=-\int_{0}^{t}\partial_{t}G(t,s,e)\partial_{e}a_{\xi,e}(s,e)\phi_{1}(s,e)ds,
\end{equation}
therefore, using (\ref{ub}) and (\ref{canonic bound}) we arrive at
\begin{equation}\label{cotadervarphi1}
\big|\partial_{e}\dot{\phi}_{1}(t,e)\big|\leq \frac{12 N\pi \sigma^{*}(1+3N\pi\sigma^{*}\mathcal{R}^{2})\mathcal{R}^{3}}{(1-E^{*})^{2}}:=\Psi
\end{equation}
Finally, 
\begin{equation}\label{cota phi}
\big|\dot{\phi_{1}}(N\pi,e)-\dot{\phi}_{1}(N\pi,0)\big|\leq  e\Psi
\end{equation}
\noindent
Now we impose the restriction 
\begin{equation*}\label{restriccion 2}
(\textbf{R}) \quad e\Psi < \big|\dot{\phi}_{1}(N\pi,0)\big|
\end{equation*}

\noindent
Combining (\ref{cota phi}) and (\textbf{R}) we obtain \footnote{For all $a,b, c \in \R$ if $|a-b|<c<|b|$ then $|a|>|b|-c$.}
\begin{equation}\label{cota denominador}
|\dot{\phi}_{1}(N\pi,e)|>\Gamma(e):=|\dot{\phi}_{1}(N\pi,0)|-e\Psi>0,
\end{equation}
for all $e\in [0,E^{**}[$ where $E^{**}$ is given by
\begin{equation}
E^{**}=\min\left\{E^{*},\hat{E} \right\}, \quad \hat{E}=\frac{|\dot{\phi}_{1}(N\pi,0)|}{2\Psi}.
\end{equation}

\noindent
Therefore the Cauchy problem (\ref{ec}) is well defined  in the rectangle
\[
\Omega^{*}_{p}=[0,E^{**}[\times ]\xi_{p}-\Delta_{*},\xi_{p}+\Delta_{*}[,
\]
with $\Delta_{*}$ given by (\ref{delta-1}), and moreover from (\ref{cota numerador}) and (\ref{cota denominador})  we get (see (\ref{funcion h}))
\begin{equation}\label{1 cota M}
\left|h(e,\xi)\right|\leq M=\Upsilon/\Gamma.
\end{equation}
with $\Gamma=|\dot{\phi}_{1}(N\pi,0)|-\Psi E^{**}$ and $M$ given by
\begin{equation}\label{2cota M}
M=\frac{12N\pi \big(1+2N\pi\sigma^{*} \mathcal{R}^{2}\big)}{(1-E^{*})^{4}\Big(\big|\dot{\phi}_{1}(N\pi,0)\big|-\Psi E^{**}\Big)}.
\end{equation}




\subsection{Proof of Theorem \ref{main theorem}}

Following the previous results in the subsections 3.1 and 3.2, we are able to proof the Theorem \ref{main theorem} stated in the section 3. For a fixed integer $N\geq 1$ and $p=1$ to $\nu$  we can apply the existence Peano's Theorem in $\Omega^{*}_{p}$ to conclude that there exists a solution $\xi=\mathcal{H}(e)$, $e\in [0,e^{*}[$ of the \textit{continuation equation} (\ref{ec}) with
\begin{equation}\label{cota e}
e^{*}=e^{*}_{N,p}:=\min\left\{E^{**},\frac{\Delta_{*}}{M}\right\}.
\end{equation}

\noindent
In consequence, we obtain an nontrivial, even, $2N\pi$-periodic solution $Z^{N}_{e,p}(t):=z(t;\mathcal{H}_{N,p}(e),e)$ of (\ref{Sit}) as a continuation of the solution $\varphi_{p}(t)=z(t;\xi_{p},0)$ for $e\in [0,e^{*}[$. On the other hand, 

\begin{equation}
\begin{split}
\left|\frac{\partial Z^{N}_{e,p}}{\partial e}\right|&\leq \left|\frac{\partial z}{\partial \xi}\right|\,\left|\frac{d\xi}{d\,e}\right|+\left|\frac{\partial z}{\partial e}\right|\\
&\leq M\mathcal{R}+\frac{3N\pi\sigma^{*} \mathcal{R}^{2}}{2(1-e^{*})}.
\end{split}
\end{equation}

\noindent
The  estimative for $\displaystyle{\left|\frac{\partial z}{\partial e}\right|}$ can be found in the Appendix 1. Finally, by the Mean Value Theorem we arrive to
\begin{equation}\label{final bound}
|Z^{N}_{e,p}|\leq \xi_{p}+ \gamma e :=\mathcal{G}(e),
\end{equation}
with $\gamma := \gamma_{N,p}$ given by
\[
\gamma :=\left( M\mathcal{R}+\frac{3N\pi\sigma^{*} \mathcal{R}^{2}}{2(1-e^{*})}\right).
\] 
and this complete the proof. $\square$

\section{Linear Stability}
\bigskip
In the previous sections we have found, for each natural number $N$ and $1\leq p \leq \nu \, ,\,\,\,\nu=[2\sqrt{2}N],$ an even $2N\pi-$periodic family of solutions $Z_{e,p}^{N}$ of the Sitnikov problem, bifurcating from the circular ones $\varphi_p=Z_{0,p}^{N}$ (see section 2) and with the remarkable property of having $p$ zeros on $[0, N\pi],$ see Lemma 7.2  in \cite{Llibre-Ortega}. This family was parametrized by the eccentricity $e$ in a computable interval. Now, we will search the stability properties of the families  $Z_{e,p}^{N}$  at least in the linear sense. For this purpose we deal with \textit{the discriminant function} associated with the first variational equation along to the periodic solution $Z_{e,p}^{N},$   

\begin{equation}\label{varper}
\ddot{y}+q(t,e,p,N)y=0,
\end{equation}
where 
\begin{equation}\label{avar}
q(t,e, p, N):=\frac{r(t,e)^2-2Z_{e,p}^{N}(t)^2}{(Z_{e,p}^{N}(t)^2+r(t,e)^2)^{5/2}}.
\end{equation}

\noindent
Hereinafter we fix $N$-odd and we denote $Z_e (t):=Z_{e,p}^{N}(t)$, and $q(t,e):= q(t,e,p,N)$, in order to simplify the notations. For $i=1,2,$ let $y_i (t,e)$ be the canonical solutions of (\ref{varper}),  satisfying  
$$
y_1(0,e)=\dot{y_2}(0,e)=1,\qquad \dot{y_1}(0,e)=y_2(0,e)=0.
$$
The {\it discriminant\, function}  associated to  (\ref{varper}) is defined by

\begin{equation}\label{discri}
\Delta (e)=y_1(2N\pi, e)+ \dot{y_2}(2N\pi, e),
\end{equation}
which is the trace  of the monodromy matrix associated to the first order planar system regard to the Hill's equation (\ref{varper}). It is a well known fact that (\ref{varper}) is stable (equivalently $Z_{e}$ is \textit{linearly stable}) if and only if the corresponding Floquet's multipliers $\rho_{1}(e)$, $\rho_{2}(e)$ satisfy some of the following conditions:  

\begin{enumerate}
  \item [i)] $\rho_1(e)=\overline{\rho_2}(e)\notin \R,\,\, |\rho_{1,2}(e)|=1$ ( \textit{Elliptic Case} ),
  \item [ii)] $\rho_{1,2}(e)=\pm 1$ and the monodromy matrix is equal to $\pm I_{d}$ where $I_{d}$ is the identity matrix, i.e. $\dot{y_1}(2N\pi,e)=y_2(2N\pi,e)=0$  (\textit{Stable Parabolic Case}).
	\end{enumerate}

\noindent
Notice that the Elliptic case is equivalent to have $|\Delta (e)|<2,$ and in the Stable Parabolic case one has $|\Delta(e)|=2$. In particular we have $\Delta (0)=2$, since the function $\displaystyle{\dot{\varphi}_{p}}$ is a $2N\pi$-periodic solution of (\ref{varper}) with $e=0$ (how a direct computation shows) and therefore $\rho_{1,2}(0)=1$ in the circular case.

\vspace{0.5 cm}
\noindent
Following a standard approach as in \cite{Magnus-Winkler} the formula of $\Delta^{\prime}(e)$ is given by
\[
\Delta^{\prime}(e)=-\left[\int_{0}^{2N\pi}\Big(G(2N\pi,s,e)y_{1}(s,e)+\partial_{t}G(2N\pi,s,e)y_{2}(s,e)\Big)\partial_{e}q(s,e)ds\right]
\]
where $\displaystyle{G(t,s,e)=y_{1}(s,e)y_{2}(t,e)-y_{1}(t,e)y_{2}(s,e)}$. Since $q(t,e)=q(-t,e)$ for all $(t,e)\in \R\times e \in [0,1[$ from the Theorem 1.1 in \cite{Magnus-Winkler} we obtain
\begin{equation}\label{derivada discri-e}
\Delta^{\prime}(e)=-\left[\int_{0}^{2N\pi}\Big(y_{1}^{2}(s,e)y_{2}(2N\pi,e)-\dot{y}_{1}(2N\pi,e)y_{2}^{2}(s,e)\Big)\partial_{e}q(s,e)ds\right]
\end{equation}

\noindent
In particular for $e=0$ we have
\begin{equation}\label{derivada discri-e cero}
\Delta^{\prime}(0)=\dot{y}_{1}(2N\pi,0)\int_{0}^{2N\pi}y_{2}^{2}(s,0)\partial_{e}q(s,0)ds,
\end{equation}

\noindent
since $y_{2}(t,0)$ is a multiple of $\displaystyle{\dot{\varphi}_{p}}$ and therefore is odd and $2N\pi$-periodic,  in consequence $y_{2}(2N\pi,0)=0.$ Moreover, by the Theorem 1.1 and 1.2 in \cite{Magnus-Winkler} we deduce that $\displaystyle{\dot{y}_{1}(2N\pi,0)\neq 0}$ therefore $Z_{0}$ is \textit{linearly unstable}.




\vspace{0.5 cm}
\noindent
The study of the sign of $\Delta '(0)$ clearly implies  a stability result for the linea\-rized equation (\ref{varper}) for small $e$. However, some numerical computations reveals that this quantity could be nule. This fact is not deducible from (\ref{derivada discri-e cero}), and in order to prove it we shall  consider  ``negative eccentricities'' in the Sitnikov equation. The first observation (see the Appendix 2) is the following

\noindent
{\it The function $r(t,e)$ can be analytically extended for $e\in [-0.6627434... ,0], $ and it verifies for $N$ odd}

\begin{equation}\label{-e}
r(t,-e)=r(t+N\pi, e),\qquad \forall t\in \R,\,\forall e\in [0,0.6627434...]. 
\end{equation}
This implies that the extended Sitnikov equation (\ref{Sit}) will be analytical for small $|e|$. Thus, we can consider again  the continuation equation (\ref{ec}) which makes sense and is analytical in $e$ on a small $\xi e-$rectangle centered in $(\xi_p,\,0)$ for each $p=1$ to $\nu.$
A similar procedure like in the section 3 led us to the existence of an unique function $\xi=h_{p}(e)$ for small $|e|$ such that $h_{p}(0)=\xi_p>0$ and the solution $ Z_{\pm e}(t)=z(t,h_p(\pm e),\pm e),$ of the extended Sitnikov equation
\[
\ddot{z}+\frac{z}{(z^{2}+r(t,\pm e))^{3/2}}=0,
\]
is even and $2N\pi$-periodic for $0\leq e< 0,6627434...$ 

\begin{lemma} For $N$ odd we have
\begin{equation*}\label{relacion}
Z_{-e}(t)=(-1)^p Z_e (t+N\pi).
\end{equation*}
\end{lemma}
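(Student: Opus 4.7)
The plan is to verify directly that $W(t):=(-1)^p Z_e(t+N\pi)$ solves exactly the same initial value problem as $Z_{-e}(t)$ and then invoke uniqueness. Using the identity $r(t+N\pi,e)=r(t,-e)$ from (\ref{-e}) (valid because $N$ is odd), substitution into the extended Sitnikov equation satisfied by $Z_e$ gives
\[
\ddot W(t)=(-1)^p\ddot Z_e(t+N\pi)=-\frac{(-1)^p Z_e(t+N\pi)}{\bigl(Z_e(t+N\pi)^2+r(t+N\pi,e)^2\bigr)^{3/2}}=-\frac{W(t)}{\bigl(W(t)^2+r(t,-e)^2\bigr)^{3/2}},
\]
since the denominator depends only on $W^2$. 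So $W$ satisfies the $-e$ equation, and it is clearly $2N\pi$-periodic because $Z_e$ is.

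Next I would check that $W$ is even with zero initial velocity. Since $Z_e$ is even and $2N\pi$-periodic, it is also symmetric about $N\pi$, so $W(-t)=(-1)^p Z_e(N\pi-t)=(-1)^p Z_e(N\pi+t)=W(t)$. The boundary condition $\dot Z_e(N\pi)=0$ built into the shooting formulation (\ref{bvp}) gives $\dot W(0)=0$. What remains is to identify the initial position $W(0)=(-1)^p Z_e(N\pi)$ with $h_p(-e)$.

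For this identification I would argue by uniqueness of the local smooth branch. The function $\tilde\xi(e):=(-1)^p z(N\pi;h_p(e),e)$ is smooth near $e=0$, and the construction above shows that $z(\cdot;\tilde\xi(e),-e)=W(\cdot)$ is a $2N\pi$-periodic even solution for the parameter $-e$; in particular $\dot z(N\pi;\tilde\xi(e),-e)=0$. Thus $\tilde\xi$ is a smooth branch of solutions of $F_N(\xi,-e)=0$, and provided $\tilde\xi(0)=\xi_p$ it must coincide with the unique such branch $h_p(-\cdot)$, yielding $W=Z_{-e}$.

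The main obstacle is the circular identity $\varphi_p(N\pi)=(-1)^p\xi_p$, which is exactly what is needed to conclude $\tilde\xi(0)=\xi_p$. I would prove it using the autonomous structure of (\ref{csitnikov}): the equation is odd in $z$, so by energy conservation the solution with $\varphi_p(0)=\xi_p$, $\dot\varphi_p(0)=0$ satisfies $\varphi_p(t+T_p/2)=-\varphi_p(t)$, where $T_p$ is its fundamental period. Counting the $p$ simple zeros of $\varphi_p$ in $[0,N\pi]$ together with the symmetry about $N\pi$ forces $T_p=2N\pi/p$, hence $N\pi=p\,T_p/2$ and $\varphi_p(t+N\pi)=(-1)^p\varphi_p(t)$; evaluating at $t=0$ gives the required identity and closes the argument.
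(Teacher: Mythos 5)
Your proof is correct and follows the same backbone as the paper's: both hinge on the identity $r(t+N\pi,e)=r(t,-e)$, which makes the shift $t\mapsto t+N\pi$ carry solutions for eccentricity $e$ to solutions for $-e$, and on the $p$ zeros of the solution in $]0,N\pi[$ to produce the sign $(-1)^p$. The only step you handle genuinely differently is the identification of $W(0)=(-1)^pZ_e(N\pi)$ with $h_p(-e)$. The paper proves $h_p(-e)=(-1)^p z(N\pi,h_p(e),e)$ directly for all small $|e|$: among $\pm z(N\pi,h_p(e),e)$ only one is positive, the zero count shows that this one is $(-1)^p z(N\pi,h_p(e),e)$, and $h_p(-e)>0$. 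You instead anchor the identification at $e=0$ via the exact circular relation $\varphi_p(N\pi)=(-1)^p\xi_p$, derived from the antiperiodicity $\varphi_p(t+T_p/2)=-\varphi_p(t)$ of the autonomous problem and the forced period $T_p=2N\pi/p$, and then propagate it by uniqueness of the smooth branch of zeros of $F_N(\cdot,-e)$ near $\xi_p$. Your variant is somewhat more explicit about why the resulting branch is the one labelled $p$ (positivity alone does not exclude landing on a different $\xi_q$; the paper implicitly relies on continuity from $e=0$ for this), at the price of computing the circular period; the paper's sign-plus-positivity argument needs no information about $T_p$. Both arguments are sound.
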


\begin{proof}
\noindent
In fact, $\xi=h_p(-e)$ is by definition the unique initial condition such that $$\displaystyle{F_N(\xi,-e)=\dot{z}(N\pi, h_p(-e),0,-e)=0}.$$
On the other hand, because the relation (\ref{-e}), the Sitnikov equation for $-e$ can be written like 

\begin{equation}\label{Sitnikov-e} 
\ddot{z}=-f(t,z,-e)=-f(t+N\pi,z,e).
\end{equation}
From here is clear that if $z(t)$ is a solution for $e>0$ then $z(t+N\pi)$ is a solution for $-e$. This implies the following identity in term of flows

\begin{equation}\label{relacionflows}
  z(t,z(N\pi,\xi,0,e),0,-e)=z(t+N\pi,\xi,e),
	\end{equation}
where $z(t,\xi,\eta,e)$ denotes the general solution for the extended Sitnikov equation. With this in mind it is not difficult to prove the following interesting relation
\begin{equation}\label{rel2}
h_p(-e)=(-1)^p z(N\pi,h_p(e),e).
\end{equation}
From the symmetry $f(t,(-1)^{p}z,e)=(-1)^{p}f(t,z,e)$, we have that $y(t)=(-1)^{p}z(t+N\pi,h_p(e),e)$ is a solution of (\ref{Sitnikov-e}) and satisfies
\[
y(0)=(-1)^{p}z(N\pi,h_p(e),e), \quad \dot{y}(0)=(-1)^{p}\dot{z}(N\pi,h_p(e),e)=0.
\]

\noindent
Thus $y(0)$ will be $h_p(-e)$ depending on the sign of $z(N\pi,h_p(e),e),$ since $h_p(-e)>0$ for small $|e|$. On the other hand 
\[
z(N\pi,h_p(e),e)h_p(e)\gtrless 0, \quad \text{for $p$ even(odd)},
\]
because $Z_e(t)$ has $p$ zeroes at $]0,N\pi[. $ If $p$ is even necessarily $h_p(-e)=z(N\pi,h_p(e),e),$ and if $p$ is odd necessarily $h_p(-e)=-z(N\pi,h_p(e),e).$ This prove (\ref{rel2}). Finally the proof of (\ref{relacion}) is as follows. Using (\ref{rel2}), (\ref{relacionflows}) and the symmetries of $f$  we arrive at
\begin{equation}
\begin{split}
Z_{-e}(t)=z(t,h_p(-e),-e)&=z(t,(-1)^p z(N\pi,h_p(e),e),-e)\\
&=(-1)^{p} z(t,z(N\pi,h_p(e),e),-e)\\
&=(-1)^{p}z(t+N\pi,h_p(e),e)\\
&=(-1)^{p}Z_e(t+N\pi)
\end{split}
\end{equation}
and this complete the proof.
\end{proof}

\begin{Prop}\label{paridad}
The discriminant function $\Delta(e)$ given by (\ref{derivada discri-e}) is well defined for small $|e|$ and moreover is an even function.
\end{Prop}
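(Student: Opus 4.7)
The plan is as follows. The first step is to justify that $\Delta(e)$ is well defined for small $|e|$: since the relation (\ref{-e}) extends $r(t,e)$ analytically to a neighbourhood of $0$, the Sitnikov equation itself extends analytically in $e$, and by the same continuation argument used in Section~3 (applied on $e\in (-e_0,e_0)$ for some small $e_0>0$) we obtain the unique analytic branch $\xi=h_p(e)$ with $h_p(0)=\xi_p$, and hence the even $2N\pi$-periodic solution $Z_e(t)$. The variational coefficient $q(t,e)$ in (\ref{avar}) is then analytic in $e$ near $0$, so the canonical solutions $y_1(\cdot,e), y_2(\cdot,e)$ of (\ref{varper}) depend analytically on $e$ and $\Delta(e)$ is well defined.

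The core observation is that the Hill's equation (\ref{varper}) with parameter $-e$ is obtained from the one with parameter $e$ by a pure translation of time by $N\pi$. Combining the previous lemma, which gives $Z_{-e}(t)=(-1)^{p}Z_{e}(t+N\pi)$ and hence $Z_{-e}(t)^{2}=Z_{e}(t+N\pi)^{2}$, with the identity $r(t,-e)=r(t+N\pi,e)$ from (\ref{-e}), one reads off directly from the formula (\ref{avar}) that
\[
q(t,-e)=q(t+N\pi,e),\qquad \forall\, t\in\R,
\]
for all small $|e|$.

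The conclusion now rests on the classical fact that the discriminant of a periodic Hill's equation is invariant under a shift of the origin of time. Let $\Psi_e(t,s)$ denote the state transition matrix of the planar first order system associated to (\ref{varper}), and $M_e(s)=\Psi_e(s+2N\pi,s)$ the corresponding monodromy matrix started at $s$. Then $M_e(s)=\Psi_e(s,0)\,M_e(0)\,\Psi_e(s,0)^{-1}$, so $\mathrm{tr}\,M_e(s)=\mathrm{tr}\,M_e(0)=\Delta(e)$ for every $s\in\R$. Applying this with $s=N\pi$ and using $q(t,-e)=q(t+N\pi,e)$ together with the $2N\pi$-periodicity of $q(\cdot,e)$, the discriminant of the equation with coefficient $q(\cdot,-e)$ coincides with $\mathrm{tr}\,M_e(N\pi)=\Delta(e)$. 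Therefore $\Delta(-e)=\Delta(e)$.

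The only non-routine ingredient I expect is making the analytic extension of $Z_e$ to a symmetric $e$-interval around $0$ fully rigorous, but this is already prepared by the extension of $r(\cdot,e)$ discussed in Appendix~2 and by the continuation scheme of Section~3; once that is in place, the identity $q(t,-e)=q(t+N\pi,e)$ together with the translation invariance of the trace of the monodromy yields the evenness immediately.
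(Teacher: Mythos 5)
Your proof is correct and follows essentially the same route as the paper: both derive the key identity $q(t,-e)=q(t+N\pi,e)$ from the preceding lemma and the extension $r(t,-e)=r(t+N\pi,e)$, and both conclude by observing that the monodromy matrix of the shifted equation is the monodromy matrix of the original equation based at $t=N\pi$, whose trace equals $\Delta(e)$ by conjugation invariance. Your version merely makes explicit the conjugacy $M_e(s)=\Psi_e(s,0)M_e(0)\Psi_e(s,0)^{-1}$ and the well-definedness via analytic continuation, both of which the paper leaves implicit.
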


\begin{proof}
Let us consider the first order linear periodic system associate to (\ref{varper})
\begin{equation}\label{linear system}
\begin{split}
\dot{u}&=v\\
\dot{v}&=-q(t,e)u
\end{split}
\end{equation}
Let $\Phi(N\pi;t)$ be the fundamental matrix of (\ref{linear system})  which is principal at $t=N\pi$. Since $q(t,-e)=q(t+N\pi,e)$ then $\Psi(0;t)=\Phi(N\pi;t+N\pi)$ is a fundamental matrix of the system
\begin{equation}\label{linear system-2}
\begin{split}
\dot{u}&=v\\
\dot{v}&=-q(t,-e)u
\end{split}
\end{equation}
which is principal at $t=0.$ Henceforth the systems (\ref{linear system}) and (\ref{linear system-2}) share the same monodromy matrix $\Phi(N\pi;3N\pi)$. In consequence
\[
\Delta(e)=\Delta(-e).
\] 
This completes the proof.
 \end{proof}

\vspace{0.5 cm}
\noindent
Since $\Delta(0)=2$ and $\Delta^{\prime}(0)=0$, the study of the stability for the family $Z^{N}_{e,p}$ depends on the sign of $\Delta^{\prime \prime}(0)$. For instance, if $\Delta^{\prime \prime}(0)>0$, then $\Delta(e)>2$ for small positive $e$ and in consequence $Z^{N}_{e,p}$ is unstable. Elsewhere if $\Delta^{\prime \prime}(0)<0$ we have that $Z_{e}$ is stable for small positive $e$. How small? To answer this question we will apply the Lemma \ref{LH}, therefore will be necessary to compute some constants in that lemma. First we initiate with an estimation of $|\Delta^{\prime \prime \prime}(e)|$ on $[0,e^{*}]$ for the cases $N=1$ and $N=3$. From the (\ref{derivada discri-e}) we have 

\begin{equation}
\Delta^{\prime}(e)=-\left[\int_{0}^{2N\pi}w(t,s,e)\partial_{e}q(s,e)ds\right]
\end{equation}
with $\displaystyle{w(s,e)=y_{1}^{2}(s,e)y_{2}(2N\pi,e)-\dot{y}_{1}(2N\pi,e)y_{2}^{2}(s,e)}.$ Therefore 
\begin{equation}\label{derivada segunda-discri}
\Delta^{\prime \prime}(e)=-\left[\int_{0}^{2N\pi}\Big(\partial_{e}w(s,e)\partial_{e}q(s,e)
+w(s,e)\partial_{e}^{2}q(s,e)\Big)ds\right]
\end{equation}
and
\begin{equation}\label{derivada tercera-discri}
\Delta^{\prime \prime \prime}(e)=-\left[\int_{0}^{2N\pi}\Big(\partial_{e}^{2}w(s,e)\partial_{e}q(s,e)
+2\partial_{e}w(s,e)\partial_{e}^{2}q(s,e)+w(s,e)\partial_{e}^{3}q(s,e)\Big)ds\right]
\end{equation}
\vspace{0.7 cm}
\noindent

\begin{theorem}
Let $N\in \N $ odd, and $p=1$ to $\nu$ fixed.  For $e\in [0,e^{*}]$ where $e^{*}=e^{*}_{N,p}$ given by the Theorem \ref{main theorem}, let $\Delta(e)$ the discriminant function defined by (\ref{discri}), $\displaystyle{\mathcal{K}=\sup_{e\in[0,e^{*}]}|\Delta^{\prime \prime \prime}(e)|}$ and $\mu$, $\mu_{0}$ and $p(e)$  defined as in Lemma \ref{LH} with $\lambda=e$ for the Hill's equation (\ref{varper})-(\ref{avar}). Then, the periodic solution $Z^{N}_{e,p}$ is:
\begin{enumerate}
\item Hyperbolic if $\Delta^{\prime \prime}(0)>0$ for all $e\in \,]0,\min\left\{\mu,e^{*}\right\}[$.
\item Elliptic if $\Delta^{\prime \prime}(0)<0$  and $p(e^{*})>0$ (resp. $p(e^{*})\leq 0$) then $|\Delta(e)|<2$ for all $\lambda \in I_{2}=]0,\min\left\{\mu,\mu_{0},e^{*}\right\}[$ (resp. $\lambda \in I_{1}$) where $\displaystyle{\mu_{0}}$ is the unique positive root of $\displaystyle{p(e)=0.}$
\end{enumerate}
\end{theorem}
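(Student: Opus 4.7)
The strategy is to recognize this theorem as essentially a translation of Lemma \ref{LH} to the Hill's equation \eqref{varper}--\eqref{avar}, with parameter $\lambda = e$ and $\Lambda = e^{*}_{N,p}$. So the plan is to verify the four hypotheses of that lemma for the variational equation along $Z^{N}_{e,p}$, and then read off the two cases directly.

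First I would check the regularity hypothesis. By Theorem \ref{main theorem}, the family $Z^{N}_{e,p}(t) = z(t;\mathcal{H}_{N,p}(e),e)$ is well defined and smooth on $[0,e^{*}_{N,p}]$, so the coefficient $q(t,e)$ in \eqref{avar} inherits $C^{3}$ regularity on $\R \times [0,e^{*}]$ from the real analyticity of the Sitnikov equation and of $\mathcal{H}_{N,p}$; periodicity in $t$ with period $2N\pi$ is automatic. Next I would check the three conditions on $\Delta$ at $e=0$: the equality $\Delta(0)=2$ was already noted in the paragraph after \eqref{discri}, since $\dot{\varphi}_{p}$ is a $2N\pi$-periodic solution of the $e=0$ variational equation and thus the Floquet multipliers are both $1$. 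The vanishing $\Delta'(0)=0$ follows from Proposition \ref{paridad}, which says $\Delta$ is an even function of $e$ near zero. Finally, $\Delta''(0)\neq 0$ is precisely the dichotomy that splits the statement into its two cases.

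Once these hypotheses are in place, the two cases follow directly from Lemma \ref{LH}. In case (1), $\Delta''(0)>0$ and part (i) of Lemma \ref{LH} gives $\Delta(e) > 2$ on $(0,\min\{\mu,e^{*}\})$; since $|\Delta(e)|>2$ is the hyperbolicity condition recalled in Remark 1 following Lemma \ref{LH}, this yields the hyperbolicity of $Z^{N}_{e,p}$. In case (2), $\Delta''(0)<0$ and part (ii) of Lemma \ref{LH} yields $|\Delta(e)|<2$ on the relevant interval (determined by the sign of $p(e^{*})$), which is the ellipticity condition for the Hill's equation \eqref{varper}, i.e.\ linear stability of $Z^{N}_{e,p}$.

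The step that carries the real content and is the main obstacle is the \emph{computability} assumption on $\mathcal{K}$: one must exhibit an explicit positive constant bounding $|\Delta'''(e)|$ on $[0,e^{*}]$. Using formula \eqref{derivada tercera-discri}, this reduces to uniform bounds on $w(s,e)$ and $\partial_{e}^{j}w(s,e)$ for $j=1,2$, as well as on $\partial_{e}^{j}q(s,e)$ for $j=1,2,3$. The first group is controlled by the bound $R_{e}\leq \mathcal{R}$ from \eqref{canonic bound} together with variation-of-parameters arguments as in the derivation of \eqref{cotadervarphi1}, applied to the canonical solutions $y_{1},y_{2}$ of \eqref{varper}; the second group requires differentiating \eqref{avar} up to three times in $e$ and using the \emph{a posteriori} bound $|Z^{N}_{e,p}|\leq \mathcal{G}(e)$ from Theorem \ref{main theorem} together with the Kepler estimate \eqref{cota kepler} and the derivative estimates on $r(t,e)$ collected in Appendix 1. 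These are lengthy but routine; once $\mathcal{K}$ is in hand, the constants $\mu$ and $\mu_{0}$ and the sign of $p(e^{*})$ are all explicit, and the theorem follows. $\square$
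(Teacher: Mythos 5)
Your proposal is correct and follows the same route as the paper, whose proof is simply the one-line observation that the result is a direct consequence of Lemma \ref{LH} combined with Proposition \ref{paridad} (for $\Delta'(0)=0$) and the earlier remark that $\Delta(0)=2$; you have merely spelled out the verification of the hypotheses. Note only that your ``main obstacle'' about exhibiting a computable bound for $|\Delta'''|$ is not actually needed for the theorem as stated, since $\mathcal{K}$ is there defined to be the supremum itself; explicit estimation of $\mathcal{K}$ only matters for the numerical application in Section 5.
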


\begin{proof}
The proof follows as a direct consequence of the Lemma \ref{LH}, the Proposition \ref{paridad} applied to the Hill's equation (\ref{varper})-(\ref{avar}).
\end{proof}

\section{Numerical Results}

In this section we discuss the application of the previous theoretical results to the
even and periodic solutions of the Sitnikov problem with $N=1$ and $N=3$ to obtain
the numerical values of the quantified interval of existence of the branches 
and their stability.

The detailed calculations for other values of $N$,
 the numerical results for values of eccentricity
close to one, the countable number of  branches that emanate from the
 trivial equilibrium solution as well as  the comparison with 
 previous results  \cite{Jimenez,Martinez-Chiralt} will be presented elsewhere.

The different variables and coefficients that have to be evaluated for the quantification
of the intervals depend on the $2N\pi$-periodic solutions of the integrable circular problem ($e=0$) and in some cases on the solution along the bifurcating branch of periodic orbits for the non-integrable case ($e\neq0$).

The first quantities can be easily computed by direct integration of the full and linearized equations once the initial condition $z(0)$ that correspond to even periodic solutions has been found.

However, the non zero eccentricity quantities require the explicit calculation of the emanating branch  ($\xi=\mathcal{H}(e)$) which can be obtained by 
numerical continuation. We have made used of the continuation procedure presented in \cite{Munoz1} for the conservative case and later extended to properly treat 
the symmetries and reversibilities in \cite{Munoz1}. See also \cite{Galan} for a review and examples from Mechanics. 

In the Sitnikov problem a two steps procedure has been necessary; first we have
continued the circular family of period orbits for $e=0$ parametrized by the period.
We have detected the initial conditions $\xi$  whose associated period is commensurate with that of the primaries. Precisely with that initial condition we have computed by 
initial value integration an appropriate starting solution for the emanating branch that was the input of a boundary value continuation in the eccentricity. 
The result is the branch that can be labelled by $N$ and $p$ where $p$ is the number of zeros in half a period $[0,N\pi]$. As a by product of the numerical continuation we compute with negligible cost the multipliers of the $2N\pi$ periodic solution and detect the possible
bifurcations.

The final outcome of the calculation is a branch in the $\xi,e$ plane for each 
$N$ and $p$ and  the linear stability of the associated periodic solution. In Figure 1 we plot the two branches for $N=1$ in a reduced interval of eccentricities ( $[0,0.25]$). The numerical results shows that the branches extend up to eccentricities close to 1 with a change of stability along the way.


It is a straightforward calculation to evaluate the different quantities
that are needed in our quantitative stability analysis.
The quantities that, in principle, do not depend on $p$ for a fixed $N$  are cast in table 1
for $N=1$ and $N=3$: $\xi_*$, $r_0$, $\mathcal{R}$ and $E^*$. However, for the two cases considered the value $r_0$ is the same and consequently also for $\mathcal{R}$ and $E^*$.

\begin{table}[h]
\begin{center}
\begin{scriptsize}
\begin{tabular}{|c|c|c|c|c|}
\hline
 N &$\xi_*$ &  $r_0$ & $\mathcal{R}$ & $E^*$   \\ \hline \hline
1  &  1.999901 & 6.621636   &	8.277124 &  4.684299 e-10	 \\ \hline	
3  &   4.160101 &  6.621636   & 8.277124 & 4.684299 e-10		\\
\hline
\end{tabular}
\end{scriptsize}
\end{center}
\caption{Numerical results for the relevant variables in the quantification
for $N=1$ and $N=3$.
\label{tbl:quantification}}
\end{table}

For a fixed $N$, the \textit{a priori} bound $\xi_*$  for the initial conditions of the $2N\pi$-periodic solutions of (\ref{Sit}) is computed by comparison with an auxiliary circular Sitnikov problem with an appropriate radius. $\xi_{*}$ is thee initial displacement corresponding to a period $4 N \pi$  (see \cite{Llibre-Ortega}). It can be computed from the
analytical expression of the period function.

The estimation of the upper bound $r_0$ for the canonical solutions deserves a comment; it does not depend on the branch and has to be valid for the 
whole $[0, \xi_*]$ interval of initial conditions. We have computed $R_0(\xi)$ (equation (\ref{cota r}))
with $0\leq \xi \leq \xi_*$ (see figure \ref{fig:r0}) and its supreme value $r_0$.
\begin{figure}[h]
\begin{center}
\includegraphics[scale=0.5]{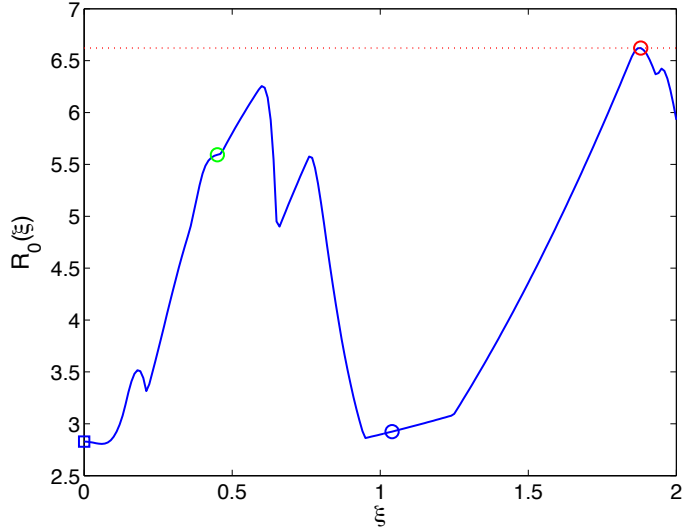}
\caption{Estimation of the upper bound $r_0$ for $N=1$.
 The function  $R_0(\xi)$  is plotted for $0\leq \xi \leq \xi_*=1.999901$.
The leftmost blue square coincides with the analytical value for  $e=0$
at a height of $2\sqrt{2}$. The green (blue) circle corresponds to the $p=2$ ($p=1$)
starting value of the branches. The red circle and the dotted red line 
indicates the upper bound valid in the whole range of values of $\xi$. 
 \label{fig:r0} }
\end{center}
\end{figure}

\vspace{0.5 cm}
\noindent
In tables 2 and 3 we list the coefficients that do depend on the specific branch $p$ for N$=1$ and $N=3$ respectively: 
 $\hat{E}$, $\Delta''(0)$, 
$\mathcal{K}$ and $\mu_0$.

In all the cases analysed  in this work the sign of $\Delta^{\prime \prime}(0)$ turns out to be negative; i.e., all the families of periodic solutions are elliptic for all $e\in [0,E^{*}]$, because the rest of bounds for the 
existence and stability of the families are less restrictive than $E^*$.

The final result for the interval length is a small quantity ($\sim 10^{-10}$)
specially if compared with the numerical continuation result that extends it up to $e\sim 0.6$
along some of the branches. We should highlight that in our case $E^{*}$ is a 
rigorously quantified value and almost 30 orders of magnitude larger than 
the value of a standard quantification via the application of the fundamental inequality comparing with the circular problem
that generates exponentially small intervals. Here the key ingredient has been the use of Lemma \ref{control hill}. The moderate change in the value of $\mathcal{R}$ compared with the
starting value of $r_0$ indicates that our novel quantification Lemma \ref{control hill} is an useful tool for the quantification of the canonical function  and their derivatives.

Besides, a higher order bound for some 
of the expressions that appear in the quantification would significantly increase the
interval of validity but would introduce more complexity and technical details to the
analysis. For the sake of simplicity we have decided to use only first order estimates. 

\begin{table}
\begin{center}
\begin{scriptsize}
\begin{tabular}{|c|c|c|c|c|}
\hline
 p & $\hat{E}$ & $\Delta''(0)$ &   $\mathcal{K}$ & $\mu_0$ \\ \hline \hline
1  &  6.2314169e-10  & -10.10096	 & 1	& 0.88995 \\ \hline	
2  &  1.582592e-9   &  -0.034051(*) & 1 &	15.328 	\\
\hline
\end{tabular}
\end{scriptsize}
\end{center}
\caption{Numerical results for the relevant variables in the  stability quantification
for $N=1$ for the two families ($p=1$ and $p=2$). 
The case $p=2$ produces an extremely flat curve for $\Delta(e)$ close to the
origin. The estimation of $\Delta''(0)$ cannot be accurately determined but the relevant issue is the sign, which is negative (stable).  \label{tbl:n1}}
\end{table}
\begin{table}
\begin{center}
\begin{scriptsize}
\begin{tabular}{|c|c|c|c|c|}
\hline
 p & $\hat{E}$ & $\Delta''(0)$ &   $\mathcal{K}$ & $\mu_0$ \\ \hline \hline
1  &    &	 &	& \\ \hline	
2  &     &   &  &		\\ \hline
3  &     &   &  &		\\ \hline
4  &     &   &  &		\\ \hline
5  &     &   &  &		\\ \hline
6  &     &   &  &		\\ \hline
7  &     &   &  &		\\ \hline
8  &     &   &  &		\\ \hline
\end{tabular}
\end{scriptsize}
\end{center}
\caption{Numerical results for the relevant variables in the quantification
for $N=1$ for the eight families ($p=1$ through $p=8$). \label{tbl:n3}}
\end{table}

\vspace{0.5 cm}
\textbf{Remarks.} 
\begin{enumerate}
\item The values of $\Delta''(0)$ and  $\mathcal{K}$ have been computed by
polynomial interpolation. The value of $\Delta''(0)$ has been satisfactorily compared with the exact expression (\ref{derivada segunda-discri}).

 \item We have not presented results for $N=2$ because for even values of $N$
we have not been able to prove the eccentricity evenness of $\Delta(e)$ that explains
the vanishing of the odd derivatives of the discriminant function. Those results will be presented
elsewhere but they display a similar behaviour to the odd $N$ cases (i.e. 
all the even periodic solutions emanate as elliptic branches from the circular case).
\end{enumerate}

\section*{Acknowledgments}
The authors acknowledge fruitful discussions with Rafael Ortega on related topics. JGV's research has been financially supported by the Spanish Ministry of Economy through grant MTM2015-65608-P and Junta de Andaluc\'ia Excellence grant 
P12-FQM-1658.

\section*{Appendix 1}
In this appendix we present some inequalities used in this manuscript. Firstly we present an uniform bound of $\displaystyle{|a_{\xi,e}(t)|}$  with
\[
a_{\xi,e}(t):=\frac{r(t,e)^{2}-2z^{2}}{\big(z^2+r^2(t,e)\big)^{5/2}},
\]
with $z=z(t,\xi,0,e)$ for all $t\in [0,N\pi]$ with $(\xi,e)\in \Omega_{p}$. From (\ref{var}) and (\ref{cota kepler}) we get the following estimation  
\begin{equation*}
|a_{\xi,e}(t)|\leq \frac{2}{(z^{2}+r^{2}(t,e))^{3/2}}\leq\frac{16}{(1-e)^{3}}.
\end{equation*}
Thus
\[
\big|a_{\xi,e}(t)\big|\leq \sigma, \quad \sigma=\sigma(E):=\frac{16}{(1-e)^{3}},
\]
uniformly on $\Omega_{p}$. Applying the Mean Value Theorem uniformly in $\xi$ is not difficult to obtain the following inequalities

\[
\big|\partial_{e}r\big|\leq \frac{1}{2(1-e)}, \quad \left|\frac{\partial a_{\xi,e}}{\partial z}\right| \leq \frac{12\sigma}{1-e}, \quad \text{and} \quad \left|\frac{\partial a_{\xi,e}}{\partial r}\right| \leq \frac{12\sigma}{1-e}.
\]

For instance, for obtain the second inequality we proceed as follows

\begin{equation*}
\begin{split}
\left|\frac{\partial a_{\xi,e}}{\partial z}\right| &=\left| \frac{3z(3r^{2}-2z^{2})}{(z^{2}+r^{2})^{7/2}}\right|\\
&=\left|\frac{3z}{z^{2}+r^{2}}\left(\frac{2r^{2}}{(z^{2}+r^{2})^{5/2}}+\frac{r^{2}-2z^{2}}{(z^{2}+r^{2})^{5/2}}\right)\right|\\
&=\left|\frac{3z}{z^{2}+r^{2}}\left(\frac{2r^{2}}{(z^{2}+r^{2})^{5/2}}+a_{\xi,e}\right)\right|\\
&\leq \frac{6}{(z^{2}+r^{2})^{1/2}}\left(\frac{r^{2}}{(z^{2}+r^{2})^{5/2}}\right)+\frac{3|a_{\xi,e}|}{(z^{2}+r^{2})^{1/2}}\\
& \leq  \frac{6}{r^{4}}+\frac{3\sigma}{r}=\frac{12\sigma}{1-e},
\end{split}
\end{equation*}
and for the third one we have
\begin{equation*}
\begin{split}
\left|\frac{\partial a_{\xi,e}}{\partial r}\right| &=\left| \frac{3r(r^{2}-4z^{2})}{(z^{2}+r^{2})^{7/2}}\right|\\
&=\left|\frac{3r}{z^{2}+r^{2}}\left(\frac{r^{2}-2z^{2}}{(z^{2}+r^{2})^{5/2}}-\frac{2z^{2}}{(z^{2}+r^{2})^{5/2}}\right)\right|\\
&=\left|\frac{3r}{z^{2}+r^{2}}\left(a_{\xi,e}-\frac{2z^{2}}{(z^{2}+r^{2})^{5/2}}\right)\right|\\
&\leq \frac{3}{r}\left(|a_{\xi,e}|+\left|\frac{2z^{2}}{(z^{2}+r^{2})^{5/2}}\right|\right)\leq  \frac{3}{r}\Big(|a_{\xi,e}|+\frac{2}{r^{3}}\Big)\leq \frac{12\sigma}{1-e}.
\end{split}
\end{equation*}
On the other hand,
\[
\Big|\frac{\partial z}{\partial e}\Big|\leq \frac{3N\pi \sigma R_{e}^{2}}{2(1-e)},
\]
with $R_{e}$ is defined like in Lemma \ref{control hill} with $\lambda=e$. In fact, by the equations (\ref{norma de p})-(\ref{ecuacion integral}) in section  3.1 we have
\begin{equation*}
\Big|\frac{\partial z}{\partial e}\Big|\leq 2N\pi \left\| p\right\|_{\infty} R_{e}^{2} \leq \frac{24N\pi R_{e}^{2}}{(1-e)^{4}}\leq \frac{3N\pi \sigma R_{e}^{2}}{2(1-e)}.
\end{equation*}

\vspace{0.5 cm}
\noindent
Finally, from the chain rule and previous estimates we get 
\begin{equation*}
\begin{split}
\left|\frac{\partial a_{\xi,e}}{\partial e}\right|&=\left|\frac{\partial a_{\xi,e}}{\partial z} \frac{\partial z}{\partial e}+\frac{\partial a_{\xi,e}}{\partial r} \frac{\partial r}{\partial e}\right|\\
&\leq \frac{18 N\pi\sigma R_{e}^{2}}{(1-e)^{2}} + \frac{6\sigma}{(1-e)^{2}}=\frac{6\sigma\left(1+3N\pi\sigma R_{e}^{2}\right)}{(1-e)^{2}}.
\end{split}
\end{equation*}

\section*{Appendix 2}

The purpose of this appendix is to show how the distance of the primaries to their center of mass given by the function $r(t,e)$ can be formally extended for negative values of $e$ around $e=0.$ To this end, we recall that the function $r(\cdot,e)$ has minimal period $2\pi$ and satisfies
\[
r(t,e)=\frac{1}{2}\left[1-e\cos(u(t,e))\right],
\]
where $u(t,e)$ is the eccentric anomaly. In appropriate units $u(t,e)$ is a function of the time via the transcendental Kepler's equation 
\[
u-e\sin u=t.
\]

It is well know that $u(t,e)$ satisfies the following
\begin{equation}
u(t+2N\pi,e)=u(t,e)+2N\pi, \quad u(-t,e)=-u(t,e), \quad N\geq 1,
\end{equation}
for all $(t,e)\in \R\times [0,1[$. Hereinafter we assume $N\in\N$ odd. From the left equation follows directly
\begin{equation}\label{prop u}
u(t+N\pi,e)=u(t+\pi,e)+(N-1)\pi
\end{equation}

\noindent
Now, using the Lagrange formula for the local inversion of holomorphic functions we can represent $u(t,e)$ as an analytic function around $e=0$ in the following form
\begin{equation}\label{serie u}
u(t,e)=t+\sum_{k=1}^{\infty}c_{k}(t)\frac{e^{k}}{k!}, \quad \text{with}\quad c_{k}(t)=\frac{d^{k-1}}{dt^{k-1}}\sin^{k} (t)\quad k\geq 1.
\end{equation}

\noindent
The series (\ref{serie u}) converges for all $t\in \R$ and small values of $e$. Moreover, notice that the coefficients $c_{k}(t)$ satisfies 
\[
c_{k}(t+N\pi)=(-1)^{k}c_{k}(t),
\]
and therefore for negative values of $e$ we obtain
\begin{equation}
\begin{split}
u(t,-e)&:=t+\sum_{k=1}^{\infty}c_{k}(t)(-1)^{k}\frac{e^{k}}{k!}\\
u(t,-e)&=t+\sum_{k=1}^{\infty}c_{k}(t+N\pi)\frac{e^{k}}{k!}\\
&=u(t+N\pi,e)-N\pi.
\end{split}
\end{equation}

\noindent
This property of the function $u(t,e)$ lead us to obtain the following property over the function $r(t,e)$
\begin{equation*}
\begin{split}
r(t+N\pi,e)&=\frac{1}{2}\left[1-e\cos\left(u(t+\pi,e)+(N-1)\pi\right)\right]\\
&=\frac{1}{2}\left[1-e\cos\left(u(t,-e)+\pi\right)\right]\\
&=\frac{1}{2}\left[1+e\cos\left(u(t,-e)\right)\right]:=r(t,-e)
\end{split}
\end{equation*}

\end{document}